\renewcommand{\phi}{\varphi}
\newcommand{\R}{\mathbb{R}}
\newcommand{\calE}{\mathcal{E}}
\newcommand{\calG}{\mathcal{G}}
\newcommand{\calI}{{\mathcal{I}}}
\newcommand{\calJ}{\mathcal{J}}
\newcommand{\calR}{\mathcal{R}}
\newcommand{\calT}{{\mathcal{T}}}
\newcommand{\calU}{\mathcal{U}}
\newcommand{\calW}{\mathcal{W}}
\newcommand{\calX}{\mathcal{X}}
\newcommand{\calY}{\mathcal{Y}}
\newcommand{\dsymmnx}{\mathrm{D^{sym}_{\|\cdot\|_\calX}}}
\newcommand{\Apa}{A^*}
\newcommand{\Bpa}{B^*}
\newcommand{\sign}{\operatorname{sign}}
\newcommand{\res}{\rho}
\newcommand{\Cint}{c_I}
\newcommand{\Cstb}{c_S}
\pgfplotsset{compat=newest}
\pgfplotsset{plot coordinates/math parser=false}
\title{Functional error estimators for the adaptive discretization of inverse problems}
\author{Christian Clason%
    \thanks{Faculty of Mathematics, Universität Duisburg-Essen, 45117 Essen, Germany (\email{christian.clason@uni-due.de})}
    \and Barbara Kaltenbacher%
    \thanks{Institute of Mathematics,
        Alpen-Adria Universität Klagenfurt,
        Universit\"atsstrasse 65--67,
        A-9020 Klagenfurt, Austria
    (\email{barbara.kaltenbacher@aau.at})}
    \and Daniel Wachsmuth%
    \thanks{Institute of Mathematics,
        Universit\"at W\"urzburg,
        Emil-Fischer-Stra{\ss}e 30,
        97074 W\"urzburg, Germany
    (\email{daniel.wachsmuth@mathematik.uni-wuerzburg.de})}
}
\begin{document}
\maketitle
\begin{abstract}
    So-called functional error estimators provide a valuable tool for reliably estimating the discretization error for a sum of two convex functions. We apply this concept to Tikhonov regularization for the solution of inverse problems for partial differential equations, not only for quadratic Hilbert space regularization terms but also for nonsmooth Banach space penalties. Examples include the measure-space norm (i.e., sparsity regularization) or the indicator function of an $L^\infty$ ball (i.e., Ivanov regularization). The error estimators can be written in terms of residuals in the optimality system that can then be estimated by conventional techniques, thus leading to explicit estimators. This is illustrated by means of an elliptic inverse source problem with the above-mentioned penalties, and numerical results are provided for the case of sparsity regularization.
\end{abstract}

\section{Introduction}

Variational regularization often leads to minimizing a sum of two convex functionals and discretization is usually performed by restricting minimization to a finite dimensional subspace.
For inverse problems in the context of large scale PDE models, adaptive  refinement of the computational mesh is crucial for an efficient numerical solution.
Recent contributions to the topic of adaptive discretization of inverse problems can be found in, e.g.,
\cite{HaberHeldmannAscher07} on adaptive finite volume discretizations for
Tikhonov--TV regularization,
\cite{KindermannNeubauer03,Neubauer07} on moving mesh regularization and adaptive grid regularization,
\cite{BenAmeurChaventJaffre02,BenAmeurKaltenbacher02,ChaventBissel98} on
refinement and coarsening indicators,
and
\cite{BangerthJoshi08,
    BeilinaClason06, BeilinaJohnson05,BeilinaKlibanov10,
    GriesbaumKaltenbacherVexler08,KaltenbacherKirchnerVexler11,
KaltenbacherKirchnerVeljovic13,KaltenbacherKirchnerVexler13} on goal oriented error estimators.

A key step for adaptive discretization is reliable estimation of the discretization error using quantities available in the numerical computations, i.e., in an \emph{a posteriori} fashion.
The functional error estimators described in \cite{Repin00} allow for an exact estimate of the discretization error and appear to be particularly promising for Tikhonov regularized inverse problems since they have originally been developed in the context of minimization of a sum of two convex functionals.
Yet so far they have not been considered for inverse problems and only very recently for control problems in, e.g., \cite{GaevskajaHoppeRepin07,LangerRepinWolfmayr14,Wolfmayr14}.
Regarding nonsmooth problems, functional error estimates have been used to derive a posteriori error estimators for the finite-element discretization of total variation denoising in \cite{Bartels:2015}.

In this work, we are concerned with linear inverse problems for PDEs consisting of the forward model
\begin{align}
    Ay&=Bu\label{model}
    \intertext{together with the measurement equation}
    Cy&=g\label{measurement}
\end{align}
where $u$ is the unknown parameter (e.g., source term, boundary data, or coefficient), $y$ is the corresponding state solving \eqref{model}, $g$ is the observable data, $A:\calY\to\calW^*$, $B:\calU\to\calW^*$, and $C:\calY\to\calG$ are linear operators, and
$\calG$, $\calU$, $\calW$, and $\calY$ are Banach spaces.

As a simple motivating example, consider the inverse problem of electroencephalography \cite{ElBadiaHaDuongIP00}, which consists in recovering the current density distribution within the brain from potential measurements on the scalp.
This can be formulated (assuming constant conductivity for simplicity) as an inverse problem for the PDE
\begin{equation}\label{eq:ex2}
    \left\{\begin{aligned}
            -\Delta y &= \chi_{\omega_c} u&&\text{ in }\Omega,\\
            \partial_\nu y&= f &&\text{ on } \partial\Omega,
    \end{aligned} \right.
\end{equation}
where $u$ is the desired current density, $\omega_c\subset\Omega$ denotes the region of interest inside the skull $\Omega$, and $f$ is the given current flux on the scalp $\partial\Omega$. The measured data is $g=y|_\Gamma$, where $\Gamma\subset\partial\Omega$ denotes the location of the electrodes on the scalp.
Here, $A$ is the negative Laplace operator, $B$ is the extension operator from $\omega_c$ to $\Omega$, and $C$ is the Dirichlet trace operator on $\Gamma$.

In practice, only a noisy measurement $g^\delta$ will typically be available,  where the noise level $\delta$ defined by
\begin{equation}
    \|g-g^\delta\|_\calG\leq\delta
\end{equation}
we here assume to be known. Since the solution of such an inverse problems is typically unstable, regularization needs to be employed; see, e.g., \cite{EnHaNe96,Scherzeretal2009} and the references therein. We will here consider the classical Tikhonov--Philips method in Banach spaces with Morozov's discrepancy principle as a regularization parameter choice strategy.

Using the parameter-to-state mapping
\begin{equation}
    S:=A^{-1}B\in L(\calU,\calY)
\end{equation}
and the reduced forward operator
\begin{equation}
    K:=CS\in L(\calU,\calG),
\end{equation}
we can write (\refeq{model}--\refeq{measurement}) as a single operator equation
\begin{equation}\label{Kug}
    Ku=g.
\end{equation}
For this reduced formulation, Tikhonov's method is given by
\begin{equation}\label{eq:minred}
    \begin{aligned}
        &\min_{u\in \calU} J_\alpha(u,Ku) \quad \text{ where }\quad
        J_\alpha(u,g)=G(g)+\calR_\alpha(u),
    \end{aligned}
\end{equation}
where $\calR_\alpha$ is an appropriate regularizing functional and $G$ a discrepancy term, which in this work will be assumed to have the form
\begin{equation}\label{eq:G}
    G(g)=\frac12 \norm[]{G}{g-g^\delta}^2.
\end{equation}
The discrepancy principle (or rather its relaxed version) amounts to choosing $\alpha=\alpha(\delta)$ such that
\begin{equation}\label{eq:discrprinc_red}
    \underline{\tau}\delta \leq \norm[]{G}{Ku_{\alpha}^\delta-g^\delta} \leq
    \overline{\tau}\delta
\end{equation}
holds, where $u_{\alpha}^\delta$ is a minimizer of \eqref{eq:minred} and $\overline{\tau}\geq\underline{\tau}\geq1$ are fixed constants independent of $\delta$.
Convergence of this method has been extensively investigated in the literature; see, e.g., \cite{EnHaNe96} and the references therein for an analysis in Hilbert spaces and \cite{BurgerOsher04,Scherzeretal2009,SeidmanVogel89} for a more general setting similar to the one considered here. For actual numerical computations, the infinite-dimensional problem has to be discretized: Finite-dimensional spaces $\calU_h\subset\calU$, $\calY_h\subset\calY$, and $\calW_h\subset\calW$
are chosen, and the solution of $Ay=Bu$ is replaced by finding $y_h\in \calY_h$ such that
\begin{equation}\label{eq:weakdiscreteformulation}
    \langle Ay_h - Bu, \ w_h\rangle = 0 \qquad\forall w_h \in \calW_h.
\end{equation}
To carry the convergence results over from the infinite-dimensional to the discretized problem, the error due to discretization has to be assessed.
In particular, it is important to carefully balance discretization and regularization.
As it turns out, only errors in the functionals $G$ and $J_\alpha$ need to be controlled in order to obtain a convergent adaptive method.
This makes the theory of functional error estimators in \cite{Repin00} applicable.
As we will show, these estimators are applicable for different choices of regularization functionals.
These include the usual squared Hilbert-space norm, i.e. $\calR_\alpha=\frac{\alpha}{2}\norm[]{U}{\cdot}^2$,
but also nonsmooth penalties of the form $\calR_\alpha=\delta_{B_{1/\alpha}^{L^\infty(\omega_c)}}$ or $\calR_\alpha=\alpha\norm[]{Mc}{\cdot}$,
where
$\mathcal{M}(\Omega)$
is the space of Radon measures. The latter penalty is useful for incorporating sparsity regularization, while the former penalty corresponds to Ivanov regularization (also called method of quasi-solutions, see \cite{Ivanov62, Ivanov63, IvanovVasinTanana02, LorenzWorliczek13, SeidmanVogel89},
as well as \cite{NeubauerRamlau14} in the context of Hilbert scales), where the regularization does not take the usual additive form with $\alpha$ as a multiplier.
In all these cases, the functional error estimators can be computed in terms of residuals in the optimality system.

This work is organized as follows. After fixing some common notation, we present in \cref{sec:setting} the basic results on convergence of adaptively discretized regularization methods and the functional error estimates our analysis relies on. These estimators are then applied to the classical Hilbert space regularization in \cref{sec:errest_red_Hil}, first in the general setting and then specifically for a model inverse source problem for the Poisson equation. Similarly, \cref{sec:errest_red_Ban1} and \cref{sec:errest_red_Ban2} treat the case of Banach space norm constraints and norm regularization, respectively, again both in the general setting and for model problems (Ivanov regularization resp.~sparsity). For the latter, numerical experiments given in \cref{sec:NumTests} demonstrate the efficiency of our approach.

\section{Notation and preliminary results}\label{sec:setting}

For some Banach space $X$ with dual $X^*$, we use the notation $\langle x^*,x\rangle_{X^*,X}$ for the canonical duality pairing.
In case of a Hilbert space $X$, $(x_1,x_2)_X$ denotes the inner product.
Moreover, $\delta_S$ denotes the indicator function of some set $S$ and $B_r^X$ the closed ball of radius $r$ around zero in the normed space $X$.

\subsection{Functional-analytic setting}\label{sec:spaces}

In the following, we assume that $\calU$, $\calW$, $\calX$, $\calY$ are Banach spaces
with $\calW$ and $\calY$ being reflexive,
and that $\calG$ is a Hilbert space.
Furthermore, we suppose that either $\calX=\calU^*$  or $\calU=\calX^*$ holds, which allows us to use a consistent notation in the rest of the paper and to avoid cumbersome case distinctions.
For a convex functional $F:\calU\to\bar{\R}$, we will denote by
\begin{equation}
    F^*:\calX\to\bar{\R},\qquad F^*(x) = \sup_{u\in \calU} \langle u,x\rangle_{\calU,\calX} - F(u)
\end{equation}
its Fenchel conjugate.
If $\calX = \calU^*$, this coincides with the usual definitions in the sense of convex analysis.
For $\calU = \calX^*$, it is common to define as here the Fenchel conjugate on $\calX$ instead of $\calX^{**}$ in the special case of $F=G^*$ (i.e., the biconjugate of $G$); the redefinition in the general case is less common but still consistent and coincides with the ``predual'' approach as in, e.g., \cite{Clason:2009}. This will allow working with spaces of continuous functions instead of the dual of measure spaces later on. In particular, the Fenchel conjugate of  $F(u) = \alpha\norm[]{U}{u}$ is always given by
\begin{equation}
    F^*(x) = \delta_{B^X_\alpha}(x) =
    \begin{cases}
        0 & \text{if }\norm[]{X}{x}\leq \alpha,\\
        \infty & \text{if }\norm[]{X}{x} >\alpha.
    \end{cases}
\end{equation}
In the case that $\calU$ is a Hilbert space, we set $\calX=\calU$, in which case the duality pairing coincides with the standard inner product. In particular, for $F(u)=\frac12 \norm[]{U}{u-z}^2$ we have
\begin{equation}
    F^*(u)=\frac12 \left(\norm[]{U}{ u-z}^2-\norm[]{U}{z}^2\right).
\end{equation}

We further denote by
\begin{equation}
    \partial F(u) := \left\{x\in \calU^*: \langle \tilde u - u,x\rangle_{\calU,\calX} \leq F(\tilde u)- F(u) \quad\text{for all } \tilde u\in \calU\right\}
\end{equation}
the convex subdifferential of $F:\calU\to\bar{\R}$. Note that we always have the inclusion $\calX\subset\calU^*$, either by equality or by using the canonical injection from $\calX$ to $\calX^{**}$.
In the latter case, existence of the duality mapping $\calJ^\calU:\calU\to \calX$, defined by
\begin{equation}
    \norm[]{X}{\calJ^\calU(u)}=1 \quad\text{ and }\quad
    \langle u,\calJ^\calU(u)\rangle_{\calU,\calX}  =\norm[]{U}{u}\qquad\text{for all }u\in\calU,
\end{equation}
i.e., $\calJ^\calU(u) \in\partial(\norm[]{U}{\cdot})(u)$, becomes an additional assumption.

We further need the linear operators $A\in L(\calY,\calW^*)$, $B\in L(\calU,\calW^*)$, and $C\in L(\calY,\calG)$, and assume that $A$ is continuously invertible.
We will also make use of the adjoints
\begin{align}
    \Apa&\in L(\calW,\calY^*)&&\text{ with }&
    \langle A y, w\rangle_{\calW^*,\calW}&=\langle y, \Apa w\rangle_{\calY,\calY^*}
                                         &&\text{ for all  } y\in \calY, \ w\in \calW,\\
    \Bpa&\in L(\calW,\calX)&&\text{ with }&
    \langle B u, w\rangle_{\calW^*,\calW}&=\langle u, \Bpa w\rangle_{\calU,\calX}
                                         &&\text{ for all  } u\in \calU, \ w\in \calW,\\
    C^*&\in L(\calG,\calY^*)&&\text{ with }&
    ( C y, g)_\calG&=\langle y, C^*g^*\rangle_{\calY,\calY^*}
                   &&\text{ for all  } y\in \calY, \ g\in \calG,\\
    K^*&\in L(\calG,\calX)&&\text{ with }&
    ( K u, g)_\calG&=\langle u, K^*g^*\rangle_{\calU,\calX}
                   &&\text{ for all  } u\in \calU, \ g\in \calG,\\
    S^*&\in L(\calY^*,\calX)&&\text{ with }&
    \langle S u, y^*\rangle_{\calY,\calY^*}&=\langle u, S^*y^*\rangle_{\calU,\calX}
                                           &&\text{ for all  } u\in \calU, \ y^*\in \calY^*.
\end{align}
Let us emphasize that the existence of $B^*$ with the mentioned properties is
an actual assumption in the case $\calU = \calX^*$, which is equivalent
to the assumption that $B$ is the adjoint operator of an operator ${}^*B$.
(With a slight abuse of notation in the first two cases, since these are actually the compositions of the standard adjoints with the canonical embeddings $W\to W^{**}$).
In addition, $\{\calR_\alpha\}_{\alpha>0}$, $\calR_\alpha:\calU\to\bar\R$, is a family of proper, convex, lower semicontinuous functionals.

Finally, let $\calU_h$, $\calY_h$, $\calW_h$ be finite dimensional subspaces of $\calU$, $\calY$, $\calW$, respectively.
In the case that $\calU$ is a Hilbert space, we will denote by $P_{\calU_h}$ the orthogonal projection onto $\calU_h$. Furthermore, $R_{\calW_h}:\calW^*\to\calW_h^*$ and $R_{\calY_h}:\calY^*\to\calY_h^*$ denote the Ritz projectors defined by
\begin{equation}\label{RYhRWh}
    \langle R_{\calW_h}w^*,w_h\rangle_{\calW_h^*,\calW_h}=\langle w^*,w_h\rangle_{\calW^*,\calW}, \qquad
    \langle R_{\calY_h}y^*,y_h\rangle_{\calY_h^*,\calY_h}=\langle y^*,y_h\rangle_{\calY^*,\calY}.
\end{equation}

\subsection{Convergence of adaptively discretized Tikhonov regularization} \label{subsec:Tikhonov}

We consider the Tikhonov regularization \eqref{eq:minred} equivalently written as a PDE-constrained minimization problem
\begin{equation}\label{eq:Tikh}
    \begin{aligned}
        &\min_{u\in \calU, \ y\in \calY} J_\alpha(u,y):= \frac12 \norm[]{G}{Cy-g^\delta}^2 +\calR_\alpha(u)
        \quad\text{ s.t. }\quad
        Ay=Bu \text{ in }\calW^*.
    \end{aligned}
\end{equation}
The discrete counterpart of \eqref{eq:Tikh} reads
\begin{equation}\label{eq:Tikh_h}
    \begin{aligned}
        &\min_{u\in \calU_h, \ y\in \calY_h} J_\alpha (u,y)
        \quad
        \text{ s.t. }\quad
        R_{\calW_h} (Ay-Bu)=0.
    \end{aligned}
\end{equation}
Let $(u_{\alpha}^\delta,y_{\alpha}^\delta)$ be the exact Tikhonov minimizer, i.e., a solution of \eqref{eq:Tikh},
and let $(u_h,y_h) \in \calU_h\times\calY_h$ be some approximation, e.g., a solution of the discrete problem \eqref{eq:Tikh_h}.
In this abstract setting we just presume existence of minimizers of \eqref{eq:Tikh} and \eqref{eq:Tikh_h} and will verify this assumption for the applications in \cref{subsec:example1}, \cref{subsec:example2}, and \cref{subsec:example3}.
The question is now how the convergence of the discrete approximation $u_h$ to solutions of the equation $Ku=g$ can be guaranteed for $(h,\alpha,\delta)\searrow0$.

The following theorem shows (similarly as in \cite{KaltenbacherKirchnerVexler11,NeuSch90}) that it is enough to adapt the discretization
and the choice of the regularization parameter $\alpha(\delta,h)$
in such a way that the
difference in the functional values satisfies
\begin{equation}\label{etaJ}
    J_\alpha(u_h,y_h)-J_\alpha(u^\delta_\alpha,y_{\alpha}^\delta)\leq\eta_J,
\end{equation}
and the difference in the discrepancy values satisfies
\begin{equation}\label{etaD}
    \norm[]{G}{K_h{u_h}-g^\delta}^2-\norm[]{G}{K{u^\delta_\alpha}-g^\delta}^2
    =\norm[]{G}{Cy_h-g^\delta}^2-\norm[]{G}{C{y^\delta_\alpha}-g^\delta}^2\leq \eta_D,
\end{equation}
where $\eta_J$ and $\eta_D$ can be controlled to be small enough relative to $\delta$.
\begin{proposition}\label{prop:conv}
    Let $(u_\alpha^\delta,y_\alpha^\delta)$ be a minimizer of \eqref{eq:Tikh} and  $(u_{\alpha,h}^\delta,y_{\alpha,h}^\delta)$ be a minimizer of \eqref{eq:Tikh_h}.
    Let $\alpha(\delta)$ be chosen such that for some constants $c_1,c_2,\overline{\tau},\underline{\tau}>0$ independent of $\delta$ with $\overline{\tau}>\underline{\tau}\geq\max\{\sqrt{1+2c_2},1+c_1\}$, the estimates
    \begin{align}
        \underline{\tau}\delta \leq \norm[]{G}{Cy_{\alpha(\delta),h}^\delta-g^\delta} &\leq
        \overline{\tau}\delta,\label{eq:discrprinc}\\
        \left|\norm[]{G}{Cy_{\alpha(\delta),h}^\delta-g^\delta}-\norm[]{G}{Cy_{\alpha(\delta)}^\delta-g^\delta}\right|&\leq c_1\delta,\label{eq:accuracy_residual}\\
        \intertext{and}
        J_{\alpha(\delta)}(u_{\alpha(\delta),h}^\delta,y_{\alpha(\delta),h}^\delta)
        -J_{\alpha(\delta)}(u_{\alpha(\delta)}^\delta,y_{\alpha(\delta)}^\delta)&\leq c_2\delta^2
        \label{eq:accuracy_cost}
    \end{align}
    hold.
    Then for any solution $u^\dagger$ to $Ku=g^\dagger$, we have
    \begin{equation}\label{eq:Rbdd}
        \calR_{\alpha(\delta)}(u_{\alpha(\delta)}^\delta) \leq \calR_{\alpha(\delta)}(u^\dagger)
        \quad\text{ and }\quad
        \calR_{\alpha(\delta)}(u_{\alpha(\delta),h}^\delta) \leq \calR_{\alpha(\delta)}(u^\dagger)
        \quad  \text{ for all } \delta>0.
    \end{equation}
    Moreover, we have
    \begin{equation} \label{eq:resOdelta}
        \norm[]{G}{Cy_{\alpha(\delta),h}^\delta-g^\delta} \leq \overline{\tau}\delta\to 0
        \quad\text{ and }\quad
        \norm[]{G}{Cy_{\alpha(\delta)}^\delta-g^\delta} \leq (\overline{\tau}+c_1)\delta\to 0.
    \end{equation}
\end{proposition}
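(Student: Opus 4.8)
The plan is to read off \eqref{eq:resOdelta} directly from the discrepancy–principle bounds, and then to obtain \eqref{eq:Rbdd} by testing the minimality of the two minimizers against the admissible pair built from $u^\dagger$. No limiting or compactness argument is needed for this proposition; everything is a careful bookkeeping of the three hypotheses \eqref{eq:discrprinc}--\eqref{eq:accuracy_cost}. First I would dispose of \eqref{eq:resOdelta}: the bound $\norm[]{G}{Cy_{\alpha(\delta),h}^\delta-g^\delta}\le\overline{\tau}\delta$ is exactly the right–hand inequality of \eqref{eq:discrprinc} and tends to $0$ with $\delta$, while for the exact residual the triangle inequality combined with \eqref{eq:discrprinc} and \eqref{eq:accuracy_residual} gives $\norm[]{G}{Cy_{\alpha(\delta)}^\delta-g^\delta}\le\norm[]{G}{Cy_{\alpha(\delta),h}^\delta-g^\delta}+c_1\delta\le(\overline{\tau}+c_1)\delta$, which again converges to $0$.

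For the first half of \eqref{eq:Rbdd}, set $y^\dagger:=Su^\dagger=A^{-1}Bu^\dagger$, so that $(u^\dagger,y^\dagger)$ satisfies the constraint $Ay^\dagger=Bu^\dagger$ of \eqref{eq:Tikh} and $Cy^\dagger=Ku^\dagger=g^\dagger$; by the assumed noise level this yields $\frac12\norm[]{G}{Cy^\dagger-g^\delta}^2\le\frac12\delta^2$. Minimality for \eqref{eq:Tikh} then gives $J_{\alpha(\delta)}(u_{\alpha(\delta)}^\delta,y_{\alpha(\delta)}^\delta)\le J_{\alpha(\delta)}(u^\dagger,y^\dagger)\le\frac12\delta^2+\calR_{\alpha(\delta)}(u^\dagger)$. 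On the other hand, the lower bound in \eqref{eq:discrprinc} together with \eqref{eq:accuracy_residual} and $\underline{\tau}\ge1+c_1$ forces $\norm[]{G}{Cy_{\alpha(\delta)}^\delta-g^\delta}\ge(\underline{\tau}-c_1)\delta\ge\delta$, hence $\frac12\norm[]{G}{Cy_{\alpha(\delta)}^\delta-g^\delta}^2\ge\frac12\delta^2$; subtracting this from the previous estimate cancels the $\frac12\delta^2$ term and leaves $\calR_{\alpha(\delta)}(u_{\alpha(\delta)}^\delta)\le\calR_{\alpha(\delta)}(u^\dagger)$.

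The discrete estimate is the one point requiring a small detour, since $u^\dagger$ need not belong to $\calU_h$ and hence cannot be used directly to test the discrete problem \eqref{eq:Tikh_h}; instead I would route the bound through the exact minimizer via \eqref{eq:accuracy_cost}. Chaining $J_{\alpha(\delta)}(u_{\alpha(\delta),h}^\delta,y_{\alpha(\delta),h}^\delta)\le J_{\alpha(\delta)}(u_{\alpha(\delta)}^\delta,y_{\alpha(\delta)}^\delta)+c_2\delta^2\le\frac12\delta^2+\calR_{\alpha(\delta)}(u^\dagger)+c_2\delta^2$, and inserting the lower bound $\frac12\norm[]{G}{Cy_{\alpha(\delta),h}^\delta-g^\delta}^2\ge\frac12\underline{\tau}^2\delta^2$ from \eqref{eq:discrprinc}, yields $\calR_{\alpha(\delta)}(u_{\alpha(\delta),h}^\delta)\le\calR_{\alpha(\delta)}(u^\dagger)+\frac12(1+2c_2-\underline{\tau}^2)\delta^2$, and the bracketed quantity is $\le0$ precisely because $\underline{\tau}\ge\sqrt{1+2c_2}$. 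This closes \eqref{eq:Rbdd}. The only genuine subtlety, beyond matching the correct constant to each case ($\underline{\tau}\ge1+c_1$ for the exact bound, $\underline{\tau}\ge\sqrt{1+2c_2}$ for the discrete one), is to pass through the exact minimizer rather than through $u^\dagger$ directly in the discrete step.
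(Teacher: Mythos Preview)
Your proof is correct and follows essentially the same approach as the paper: both use minimality of the continuous problem against $(u^\dagger,Su^\dagger)$, route the discrete bound through the continuous minimizer via \eqref{eq:accuracy_cost}, and match the constants $\underline{\tau}\ge1+c_1$ and $\underline{\tau}\ge\sqrt{1+2c_2}$ to the continuous and discrete estimates respectively. The paper merely packages the same comparisons into a single chain of six inequalities and then extracts the two relevant pairs, whereas you treat the two halves of \eqref{eq:Rbdd} separately; the content is the same.
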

\begin{proof}
    Set $\alpha_*:=\alpha(\delta)$.
    By the assumptions (\refeq{eq:discrprinc}--\refeq{eq:accuracy_cost}) and minimality of
    $(u_{\alpha_*}^\delta, y_{\alpha_*}^\delta)$, we have for any solution $u^\dagger$ to $Ku=g^\dagger$
    \begin{equation}
        \begin{aligned}
            {\frac12\underline{\tau}^2\delta^2 + \calR_{\alpha_*}(u_{\alpha_*,h}^\delta)-c_2\delta^2}
            &\leq \frac12\norm[]{G}{Cy_{\alpha_*,h}^\delta-g^\delta}^2+\calR_{\alpha_*}(u_{\alpha_*,h}^\delta)
            -c_2\delta^2\\
            &\leq \frac12\norm[]{G}{Cy_{\alpha_*}^\delta-g^\delta}^2+\calR_{\alpha_*}(u_{\alpha_*}^\delta)
            \leq \frac12\norm[]{G}{Ku^\dagger-g^\delta}^2+\calR_{\alpha_*}(u^\dagger)
            \\
            &\leq \frac12\delta^2+\calR_{\alpha_*}(u^\dagger)
            \leq \frac12\frac{1}{(\underline{\tau}-c_1)^2} \norm[]{G}{Cy_{\alpha_*}^\delta-g^\delta}^2
            +\calR_{\alpha_*}(u^\dagger)
        \end{aligned}
    \end{equation}
    (where we have used
    $\norm[]{G}{Cy_{\alpha_*}^\delta-g^\delta}\geq (\underline{\tau}-c_1)\delta$ in the last estimate),
    which by comparison of the third and the sixth as well as of the first and the fifth expression in this chain of inequalities together with $\underline{\tau}\geq\max\{\sqrt{1+2c_2},1+c_1\}$ yields
    \eqref{eq:Rbdd}. The convergence \eqref{eq:resOdelta} follows directly from \eqref{eq:discrprinc} and \eqref{eq:accuracy_residual}.
\end{proof}
Note that no absolute value is required in the estimate \eqref{eq:accuracy_cost}.
From \eqref{eq:Rbdd} and \eqref{eq:resOdelta}, convergence and convergence rates for both the continuous and discrete sequence as $\delta \to 0$ follow under the usual assumptions on $\calR$, see, e.g., \cite{EnHaNe96,Scherzeretal2009,SKHK12}.

\begin{remark}
    Here we have taken into account the fact that in practical computations, the discrepancy principle \eqref{eq:discrprinc} can only be checked for the discrete residual
    $\norm[]{G}{Cy_{\alpha_*(\delta),h}^\delta-g^\delta}=\norm[]{G}{K_hu_{\alpha_*(\delta),h}^\delta-g^\delta}$, not the exact residual $\norm[]{G}{Ku_{\alpha_*(\delta),h}^\delta-g^\delta}$ for which \eqref{etaD} can be employed. To bridge the gap between these two quantities, we will use the triangle inequality and an additional estimate of
    $\norm[]{G}{K_hu_{\alpha_*(\delta),h}^\delta-Ku_{\alpha_*(\delta),h}^\delta}$.
\end{remark}

The accuracy requirements that will have to be met by an adaptive discretization are stated in assumptions
\eqref{eq:accuracy_residual} and \eqref{eq:accuracy_cost}. Note that for this purpose, the accuracy of $u$ need not be controlled directly,
but only via the residual norm and cost function values.
In the next section, we will derive
corresponding estimates based on the functional error estimates from \cite{Repin00}.

\subsection{Functional a posteriori estimators}\label{subsec:Repin}

Our approach is based on the following functional error estimate,
which is inspired by \cite{Repin00}.
We employ the strong convexity of the discrepancy term \eqref{eq:G}
to obtain a slightly improved estimate.

\begin{proposition}\label{th:Repin}
    Let $(u_\alpha^\delta,y_\alpha^\delta)$ be a minimizer of \eqref{eq:Tikh}.
    Assume that there is a family of functions $\{\phi_\alpha\}_{\alpha>0}$, $\phi_\alpha:\calU \times \calU \to\R_0^+$,
    satisfying
    \begin{equation}\label{eq:strongconvexity}
        \lambda(1-\lambda)\phi_\alpha(u_1,u_2)\leq \lambda \calR_\alpha(u_1)+(1-\lambda)\calR_\alpha(u_2)- \calR_\alpha\left(\lambda u_1+(1-\lambda)u_2\right)
    \end{equation}
    for all $u_1,u_2\in \calU$, $\alpha>0$, and $\lambda\in(0,1)$.
    Let $v\in \calU$ and $g^*\in \calG$ be arbitrary.
    Then, any $v\in \calU$ and $g^*\in \calG$ satisfy
    \begin{equation}
        \begin{aligned}[t]
            \frac12\| K(u_\alpha^\delta - v)\|_\calG^2  + \phi_\alpha(u_\alpha^\delta, v)
            &\le J_\alpha(v,Kv) - J_\alpha(u_\alpha^\delta,Ku_\alpha^\delta)\\
            &\le \calR_\alpha(v)+\calR_\alpha^*(K^*g^*)+G(Kv)+ G^*(-g^*).
        \end{aligned}
        \label{eq:estRepin}
    \end{equation}
\end{proposition}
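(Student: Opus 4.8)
The plan is to pass to the reduced functional $j(u) := J_\alpha(u,Ku) = G(Ku) + \calR_\alpha(u)$ on $\calU$. Since the constraint in \eqref{eq:Tikh} forces $y_\alpha^\delta = A^{-1}Bu_\alpha^\delta = Su_\alpha^\delta$, hence $Cy_\alpha^\delta = CSu_\alpha^\delta = Ku_\alpha^\delta$, we have $J_\alpha(u_\alpha^\delta,y_\alpha^\delta) = j(u_\alpha^\delta)$, and for every competitor $v\in\calU$ the pair $(v,Sv)$ is feasible for \eqref{eq:Tikh}, so $j(v)\ge j(u_\alpha^\delta) = \min_{u\in\calU} j(u)$. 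Thus the middle term in \eqref{eq:estRepin} equals $j(v) - j(u_\alpha^\delta)\ge 0$, and it remains to sandwich it.

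\emph{Lower bound.} First I would record the elementary identity for the quadratic part $q(u) := G(Ku) = \tfrac12\norm[]{G}{Ku-g^\delta}^2$: applying $\norm[]{G}{\lambda a + (1-\lambda)b}^2 = \lambda\norm[]{G}{a}^2 + (1-\lambda)\norm[]{G}{b}^2 - \lambda(1-\lambda)\norm[]{G}{a-b}^2$ with $a = Ku_1 - g^\delta$, $b = Ku_2 - g^\delta$ gives
\[
  \lambda q(u_1) + (1-\lambda)q(u_2) - q(\lambda u_1 + (1-\lambda)u_2) = \tfrac12\lambda(1-\lambda)\norm[]{G}{K(u_1-u_2)}^2 .
\]
Adding this to the strong-convexity hypothesis \eqref{eq:strongconvexity} for $\calR_\alpha$ yields, for $j = q + \calR_\alpha$,
\[
  \lambda(1-\lambda)\Bigl(\tfrac12\norm[]{G}{K(u_1-u_2)}^2 + \phi_\alpha(u_1,u_2)\Bigr) \le \lambda j(u_1) + (1-\lambda)j(u_2) - j\bigl(\lambda u_1 + (1-\lambda)u_2\bigr).
\]
Choosing $u_1 = u_\alpha^\delta$, $u_2 = v$, bounding $j(\lambda u_\alpha^\delta + (1-\lambda)v)\ge j(u_\alpha^\delta)$ by minimality, dividing by $1-\lambda>0$, and letting $\lambda\uparrow 1$ produces exactly $\tfrac12\norm[]{G}{K(u_\alpha^\delta-v)}^2 + \phi_\alpha(u_\alpha^\delta,v) \le j(v) - j(u_\alpha^\delta)$, the first inequality of \eqref{eq:estRepin}.

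\emph{Upper bound.} Since $j(v) = G(Kv) + \calR_\alpha(v)$, it suffices to show $-j(u_\alpha^\delta)\le G^*(-g^*) + \calR_\alpha^*(K^*g^*)$, and I would in fact prove the stronger pointwise statement $j(u)\ge -G^*(-g^*) - \calR_\alpha^*(K^*g^*)$ for \emph{every} $u\in\calU$ and $g^*\in\calG$, which is nothing but weak Fenchel duality and needs no constraint qualification. Indeed, the Fenchel--Young inequalities $\calR_\alpha(u) + \calR_\alpha^*(K^*g^*)\ge \langle u,K^*g^*\rangle_{\calU,\calX} = (Ku,g^*)_\calG$ and $G(Ku) + G^*(-g^*)\ge (Ku,-g^*)_\calG$, added together, give $G(Ku) + \calR_\alpha(u) + G^*(-g^*) + \calR_\alpha^*(K^*g^*)\ge 0$. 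Specializing to $u = u_\alpha^\delta$ and rearranging gives the second inequality of \eqref{eq:estRepin}.

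\emph{Main points to watch.} There is no genuine obstacle here, only three points requiring care: identifying $J_\alpha(u_\alpha^\delta,y_\alpha^\delta)$ with the reduced minimum value via the state equation; keeping the order of the arguments of the (not necessarily symmetric) map $\phi_\alpha$ consistent when passing $\lambda\uparrow 1$ in the lower bound; and noting that only weak duality (Fenchel--Young) enters the upper estimate, so that no source condition on $u_\alpha^\delta$ or qualification on $K,\calR_\alpha,G$ is needed and $g^*\in\calG$ may indeed be arbitrary.
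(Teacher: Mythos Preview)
Your proof is correct and follows essentially the same route as the paper: the lower bound via the combined strong convexity of $G\circ K + \calR_\alpha$, minimality of $u_\alpha^\delta$, and the limit $\lambda\uparrow 1$, and the upper bound via weak duality (the paper states only ``consequence of weak duality'' where you spell out the two Fenchel--Young inequalities). Your additional remarks on the reduction to $j$ and the asymmetry of $\phi_\alpha$ are helpful clarifications but do not change the argument.
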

\begin{proof}
    Due to the assumptions and the strong convexity of $G$, we have
    for $v\in \calU$ and $\lambda\in(0,1)$
    \begin{equation}
        \begin{aligned}
            \lambda(1-\lambda)\left( \frac12 \|K(u_\alpha^\delta - v)\|_\calG^2
            + \phi_\alpha(u_\alpha^\delta, v) \right)
            &\leq \lambda J_\alpha(u_\alpha^\delta,Ku_\alpha^\delta)+(1-\lambda)J_\alpha(v,Kv)\\
            \MoveEqLeft[-1]- J_\alpha\left(\lambda u_\alpha^\delta+(1-\lambda)v,K(\lambda u_\alpha^\delta+(1-\lambda)v)\right)\\
            &\leq (1-\lambda)(J_\alpha(v,Kv)-J_\alpha(u_\alpha^\delta,Ku_\alpha^\delta)),
        \end{aligned}
    \end{equation}
    where we have used optimality of $u_\alpha^\delta$ in the last step.
    Dividing by $1-\lambda$ and letting $\lambda\nearrow 1$, we obtain the first inequality. The second inequality is a consequence of weak duality.
\end{proof}
Condition~\eqref{eq:strongconvexity} is satisfied, e.g., with
$\varphi_\alpha(u_1,u_2)=\frac{\alpha}{2}\norm[]{U}{u_1-u_2}^2$
in the case of a quadratic Hilbert space penalty; see \cref{sec:errest_red_Hil}.
But we will see that \eqref{eq:estRepin} still provides valuable information on the error if \eqref{eq:strongconvexity} is only satisfied with $\varphi_\alpha(u_1,u_2)=0$, as in the case of Banach space norm constraints and penalties; see \cref{sec:errest_red_Ban1} and \cref{sec:errest_red_Ban2}, respectively.

Here it is important to note that the right-hand side of estimate \eqref{eq:estRepin}
does not contain the unknown solution $u_\alpha^\delta$. We will use this estimate
with $v:=u_{\alpha,h}^\delta$, which is available in the numerical computations.
We also point out that the right-hand side corresponds to the duality gap between problem \eqref{eq:minred} and its dual problem in the sense of convex analysis; see, e.g., \cite{EkelandTemam}. Hence if $v$ and $g^*$ satisfy primal-dual extremality relations for \eqref{eq:minred}, then the right-hand side of \eqref{eq:estRepin} vanishes.

The sub- and superscripts $\alpha$, $\delta$ will be omitted in the following.
Instead, we will write $(\bar{u}, \bar{y})$, $(\bar{u}_h, \bar{y}_h)$ for the continuous and discrete minimizers $(u_\alpha^\delta,y_\alpha^\delta)$, $(u_{\alpha,h}^\delta,y_{\alpha,h}^\delta)$, respectively.

\subsection{Model problem}

To illustrate the derived estimates, we will apply them to the identification of the source term $u$ in
\begin{equation}\label{eq:ex1}
    \left\{\begin{aligned}
            -\Delta y &= \chi_{\omega_c}u &&\text{ in }\Omega,\\
            y&= 0 &&\text{ on } \partial\Omega,
    \end{aligned} \right.
\end{equation}
on a domain $\Omega\subseteq\R^n$, $n\in\{1,2,3\}$, from restricted observations $g^\delta$ of $y$ in $\omega_o$.
Hence,
\begin{equation}
    \left\{\begin{aligned}
            Ay&=-\Delta y, \quad &A^*&=A, \\
            Bu&= \chi_{\omega_c}u, \quad &\Bpa w &= w\vert_{\omega_c},\\
            Cy&=y\vert_{\omega_o}, \quad &C^* g&= \chi_{\omega_o} g,\\
    \end{aligned}\right.
\end{equation}
and $\calG=L^2(\omega_o)$.
In the sequel, we assume that $\Omega$ is polyhedral and convex. This enables us to employ $H^2$-regularity results for the elliptic equation \eqref{eq:ex1}.
In addition, we can avoid technicalities in the finite element setting on curved domains.

We define $\calY_h=\calW_h$ by continuous piecewise linear finite elements on a shape regular triangulation $\calT_h$
consisting of element domains $K$;
see, e.g., \cite{Braess07}.
The set of all faces of elements will be denoted by $\calE_h$.
The associated nodal interpolation operator will be denoted by $\calI^{\calT}{}\!$, which is continuous from ${C}_b(\overline{\Omega})$ to $\calY_h$.
We will employ the standard interpolation estimates
\begin{equation} \label{eq:estint}
    \left\{\begin{aligned}
            &\forall K\in \calT_h:&\norm[]{L2K}{v-\calI^\calT v}&\leq \Cint h_K^2 |v|_{H^2(K)} \quad &&\forall v\in H^2(\Omega)\,,\\
            &\forall F\in \calE_h:&\norm[]{L2F}{v-\calI^\calT v}&\leq \Cint h_K^{3/2} |v|_{H^2(K)} \quad &&\forall v\in H^2(\Omega)\,,
    \end{aligned}\right.
\end{equation}
where $h_K$ is the element diameter, as well as the stability estimate
\begin{equation} \label{eq:eststab}
    \norm[]{H2}{v} \leq \Cstb \norm[]{L2}{\Delta v}\qquad \forall v\in H^2(\Omega)\cap H_0^1(\Omega),
\end{equation}
cf. \cite[Thm.~II.6.4]{Braess07} and \cite[Thm.~3.3.7]{Ciarlet78}.

\section{Hilbert space regularization}\label{sec:errest_red_Hil}

In this section, we assume that $\calU$ is a Hilbert space, identify $\calX$ with $\calU$, and consider as regularization term the squared norm, i.e.,
\begin{equation}
    \begin{aligned}
        \calR_\alpha=\frac{\alpha}{2}\norm[]{U}{\cdot}^2,  \qquad \text{and hence} \qquad
        \calR_\alpha^*=\frac{1}{2\alpha}\norm[]{U}{\cdot}^2.
    \end{aligned}
\end{equation}
Since $\calJ_\alpha$ is differentiable, we obtain for \eqref{eq:Tikh} by standard Lagrangian calculus the optimality system
\begin{equation}\label{eq:optsys}
    \left\{\begin{aligned}
            C^*(C\bar{y}-g^\delta)+\Apa\bar{w}&=0,\\
            \alpha \bar{u}- \Bpa\bar{w}&=0,\\
            A\bar{y}-B\bar{u}&=0.
    \end{aligned}\right.
\end{equation}
The corresponding discrete system for \eqref{eq:Tikh_h} is\begin{equation}\label{eq:optsys_discr}
    \left\{\begin{aligned}
            R_{\calY_h} \left(C^*(C\bar{y}_h-g^\delta)+ \Apa\bar{w}_h\right)&=0,\\
            \alpha \bar{u}_h-P_{\calU_h} \Bpa\bar{w}_h&=0,\\
            R_{\calW_h} \left(A\bar{y}_h-B\bar{u}_h\right)&=0,
    \end{aligned}\right.
\end{equation}
with $R_{\calY_h}$, $R_{\calW_h}$ as in \eqref{RYhRWh},
which corresponds to a finite element discretization of the state and adjoint equation.
The solution $(\bar{u}_h,\bar{y}_h,\bar{w}_h) \in \calU_h\times \calY_h\times \calW_h$ of \eqref{eq:optsys_discr} can be considered as an approximation to the solution $(\bar{u},\bar{y},\bar{w})\in\calU\times\calY\times\calW$ of \eqref{eq:optsys}.

\subsection{Error estimates}

Setting $\phi_\alpha(u_1,u_2)=\frac{\alpha}{2} \norm[]{U}{u_1-u_2}^2$,
we obtain from \cref{th:Repin} that the solution $\bar{u}$ to \eqref{eq:minred}
satisfies
\begin{equation}\label{eq:est_red}
    \begin{aligned}[t]
        \alpha \norm[]{U}{u-\bar{u}}^2+\norm[]{G}{Ku-K\bar{u}}^2
        &\leq2\left(J_\alpha(u,Ku)-J_\alpha(\bar{u},K\bar{u})\right)\\
        &\leq\alpha \norm[]{U}{u}^2
        +\frac{1}{\alpha}\norm[]{U}{K^* g^*}^2
        +\norm[]{G}{Ku-g^\delta}^2+\norm[]{G}{g^*-g^\delta}^2-\norm[]{G}{g^\delta}^2
        \\
        &= \alpha \norm[]{U}{u}^2
        +\frac{1}{\alpha}\norm[]{U}{K^*(g-g^\delta)}^2
        +\norm[]{G}{Ku-g^\delta}^2+\norm[]{G}{g}^2-\norm[]{G}{g^\delta}^2
        \\
        &=
        \frac{1}{\alpha} \norm[]{U}{\alpha u + K^*(g-g^\delta)}^2
        +\norm[]{G}{Ku-g}^2,
    \end{aligned}
\end{equation}
for any $u\in \calU$ and $g^*:=g^\delta -g \in \calG$ for any $g\in \calG$.
We now define
\begin{equation}\label{eq:yhat}
    \hat{y}:=S\bar{u}_h=A^{-1}B\bar{u}_h.
\end{equation}
Inserting $u=\bar{u}_h$ and $g=C\bar{y}_h$ in \eqref{eq:est_red},
we arrive at
\begin{equation}\label{eq:est_red_h_aposteriori}
    \begin{aligned}[t]
        \alpha \norm[]{U}{\bar{u}_h-\bar{u}}^2+ \norm[]{G}{C\hat{y}-C\bar{y}}^2
        &\leq 2 \left(J_\alpha(\bar{u}_h,\hat{y})-J_\alpha(\bar{u},\bar{y})\right)\\
        &\leq   \frac{1}{\alpha} \norm[]{U}{\alpha \bar{u}_h
        +  S^* C^* (C\bar{y}_h-g^\delta)}^2
        +\norm[]{G}{C(A^{-1}B\bar{u}_h-\bar{y}_h)}^2
        \\
        &=
        \frac{1}{\alpha} \norm[]{U}{\alpha \bar{u}_h- \Bpa\bar{w}_h \
        +\  S^*\left(C^*(C\bar{y}_h-g^\delta)+ \Apa\bar{w}_h\right)}^2\\
        \MoveEqLeft[-1]
        +\norm[]{G}{CA^{-1}\left(A\bar{y}_h-B\bar{u}_h\right)}^2.
    \end{aligned}
\end{equation}
Here, \eqref{eq:est_red_h_aposteriori} contains the residuals of the equations in the optimality system \eqref{eq:optsys}, which are given by
\begin{equation}\label{eq:res123}
    \left\{\begin{aligned}
            \res_w&:=C^*(C\bar{y}_h-g^\delta)+\Apa\bar{w}_h=\Apa(\bar{w}_h-\hat{w}),\\
            \res_u&:=\alpha \bar{u}_h- \Bpa\bar{w}_h,\\
            \res_y&:=A\bar{y}_h-B\bar{u}_h=A(\bar{y}_h-\hat{y}),
    \end{aligned}\right.
\end{equation}
where $(\hat{y},\hat{w})\in\calY\times\calW$ and $(\bar{y}_h,\bar{w}_h)\in\calY_h\times\calW_h$ satisfy
\begin{equation}\label{eq:yhatwhat}
    \left\{\begin{aligned}
            R_{\calY_h} \left(C^*(C\bar{y}_h-g^\delta)+ \Apa\bar{w}_h\right)&=0,\\
            R_{\calW_h} \left(A\bar{y}_h-B\bar{u}_h\right)&=0,
    \end{aligned}\right.
    \qquad
    \left\{\begin{aligned}
            C^*(C\bar{y}_h-g^\delta)+ \Apa\hat{w}&=0,\\
            A\hat{y}-B\bar{u}_h&=0,
    \end{aligned}\right.
\end{equation}
for the same $\bar{u}_h\in\calU_h$ (note that the left system is coupled, as opposed to the right one).
Thus the inequality \eqref{eq:est_red_h_aposteriori} appears to be suited for a posteriori error estimation.

Although estimate \eqref{eq:est_red_h_aposteriori} only gives an estimate on $K\bar{u}_h-K\bar{u}=C\hat{y}-C\bar{y}$ and not on $K_h\bar{u}_h-K\bar{u}=C\bar{y}_h-C\bar{y}$ (which is needed for \eqref{eq:accuracy_residual}), we can use the identity $\bar{y}_h-\hat{y}=A^{-1}\res_y$,i.e.,
\begin{equation}\label{CAres3}
    C\bar{y}_h-C\hat{y}=CA^{-1}\left(A\bar{y}_h-B\bar{u}_h\right),
\end{equation}
the triangle inequality, and the fact that
\begin{equation}\label{abcd}
    \forall a,b,c,d\geq0 : \ a+b^2\leq c+d^2 \ \Rightarrow \ a+(b+d)^2\leq \gamma c+\sigma d^2
\end{equation}
holds for
\begin{equation}\label{rhosigma}
    \left(\sigma=4\text{ and }\gamma\geq2\right) \quad\text{ or }
    \quad\left(\sigma>4\text{ and }\gamma>\frac{2\sigma}{\sigma+\sqrt{\sigma^2-4\sigma}}\right)
\end{equation}
(see the \nameref{app} for a proof)
as well as
\begin{equation}\label{estJ}
    J_\alpha(\bar{u}_h,\bar{y}_h)-J_\alpha(\bar{u}_h,\hat{y})
    = (C\bar{y}_h-g^\delta,C\bar{y}_h-C\hat{y})_\calG-\frac12\norm[]{G}{C\bar{y}_h-C\hat{y}}
\end{equation}
to obtain from \eqref{eq:est_red_h_aposteriori} the following a posteriori estimate.
\begin{proposition}\label{aposteriori_hilbert}
    Let $\calU$ be a Hilbert space and $\calR_\alpha=\frac\alpha2\norm[]{U}{\cdot}^2$. Then the  minimizers $(\bar{u},\bar{y})$ of \eqref{eq:Tikh} and  $(\bar{u}_h,\bar{y}_h)$ of  \eqref{eq:Tikh_h} satisfy the estimates
    \begin{align}
        \qquad\alpha \norm[]{U}{\bar{u}_h-\bar{u}}^2+ \norm[]{G}{C\bar{y}_h-C\bar{y}}^2
        &\leq\frac{\gamma}{\alpha} \norm[]{U}{ \Bpa (\Apa)^{-1} \res_w\ + \ \res_u}^2
        +\sigma\norm[]{G}{CA^{-1}\res_y}^2,\label{eq:est_red_h_aposteriori_s}\\
        J_\alpha(\bar{u}_h,\bar{y}_h)-J_\alpha(\bar{u},\bar{y})
        &\leq\frac{1}{2\alpha} \norm[]{U}{ \Bpa (\Apa)^{-1} \res_w\ + \ \res_u}^2
        +
        (C\bar{y}_h-g^\delta,CA^{-1}\res_y)_\calG,
        \label{eq:est_red_h_aposteriori_J}
    \end{align}
    with $\sigma$ and $\gamma$ as in \eqref{rhosigma} and $\res_w$, $\res_u$, and $\res_y$ as in \eqref{eq:res123}.
\end{proposition}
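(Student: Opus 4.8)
The plan is to assemble \eqref{eq:est_red_h_aposteriori_s} and \eqref{eq:est_red_h_aposteriori_J} directly from the ingredients already laid out in the text, so the proof is mostly a matter of carefully chaining the estimates rather than proving anything genuinely new. I would start from inequality \eqref{eq:est_red_h_aposteriori}, whose right-hand side is already written in terms of the residuals: the first term is $\tfrac1\alpha\norm[]{U}{\alpha\bar u_h-\Bpa\bar w_h + S^*(C^*(C\bar y_h-g^\delta)+\Apa\bar w_h)}^2 = \tfrac1\alpha\norm[]{U}{\res_u + S^*\res_w}^2$, and by the definitions $S^* = (A^{-1}B)^* = \Bpa(\Apa)^{-1}$ (reading off the adjoint identities for $A^*$ and $B^*$) this equals $\tfrac1\alpha\norm[]{U}{\res_u + \Bpa(\Apa)^{-1}\res_w}^2$; the second term is $\norm[]{G}{CA^{-1}\res_y}^2$ by \eqref{CAres3} and the definition of $\res_y$ in \eqref{eq:res123}. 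So \eqref{eq:est_red_h_aposteriori} already reads
\[
    \alpha\norm[]{U}{\bar u_h-\bar u}^2 + \norm[]{G}{C\hat y - C\bar y}^2
    \le \tfrac1\alpha\norm[]{U}{\Bpa(\Apa)^{-1}\res_w + \res_u}^2 + \norm[]{G}{CA^{-1}\res_y}^2.
\]

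For \eqref{eq:est_red_h_aposteriori_s} I then need to replace $C\hat y$ by $C\bar y_h$ on the left. Writing $C\bar y_h - C\bar y = (C\bar y_h - C\hat y) + (C\hat y - C\bar y)$ and using $C\bar y_h - C\hat y = CA^{-1}\res_y$ from \eqref{CAres3}, the triangle inequality gives $\norm[]{G}{C\bar y_h - C\bar y} \le \norm[]{G}{C\hat y - C\bar y} + \norm[]{G}{CA^{-1}\res_y}$. Now I apply the elementary implication \eqref{abcd}--\eqref{rhosigma} with $a = \alpha\norm[]{U}{\bar u_h-\bar u}^2$, $b = \norm[]{G}{C\hat y-C\bar y}$, $c = \tfrac1\alpha\norm[]{U}{\Bpa(\Apa)^{-1}\res_w+\res_u}^2$, and $d = \norm[]{G}{CA^{-1}\res_y}$: the hypothesis $a + b^2 \le c + d^2$ is exactly the displayed inequality above, and the conclusion $a + (b+d)^2 \le \gamma c + \sigma d^2$ becomes $\alpha\norm[]{U}{\bar u_h-\bar u}^2 + (\norm[]{G}{C\hat y-C\bar y}+\norm[]{G}{CA^{-1}\res_y})^2 \le \tfrac\gamma\alpha\norm[]{U}{\cdots}^2 + \sigma\norm[]{G}{CA^{-1}\res_y}^2$, and bounding the left side below by $\alpha\norm[]{U}{\bar u_h-\bar u}^2 + \norm[]{G}{C\bar y_h-C\bar y}^2$ via the triangle inequality yields \eqref{eq:est_red_h_aposteriori_s}.

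For \eqref{eq:est_red_h_aposteriori_J} I use the middle term of \eqref{eq:est_red_h_aposteriori}, namely $2(J_\alpha(\bar u_h,\hat y) - J_\alpha(\bar u,\bar y)) \le \tfrac1\alpha\norm[]{U}{\Bpa(\Apa)^{-1}\res_w+\res_u}^2 + \norm[]{G}{CA^{-1}\res_y}^2$, hence $J_\alpha(\bar u_h,\hat y) - J_\alpha(\bar u,\bar y) \le \tfrac1{2\alpha}\norm[]{U}{\cdots}^2 + \tfrac12\norm[]{G}{CA^{-1}\res_y}^2$. Then I add the identity \eqref{estJ}, $J_\alpha(\bar u_h,\bar y_h) - J_\alpha(\bar u_h,\hat y) = (C\bar y_h-g^\delta, C\bar y_h-C\hat y)_\calG - \tfrac12\norm[]{G}{C\bar y_h-C\hat y}^2$ (which follows by expanding the squared discrepancy norms, since $\calR_\alpha$ is the same for both arguments), and observe that $C\bar y_h - C\hat y = CA^{-1}\res_y$ makes the two $\tfrac12\norm[]{G}{\cdot}^2$ terms cancel, leaving $J_\alpha(\bar u_h,\bar y_h) - J_\alpha(\bar u,\bar y) \le \tfrac1{2\alpha}\norm[]{U}{\Bpa(\Apa)^{-1}\res_w+\res_u}^2 + (C\bar y_h-g^\delta, CA^{-1}\res_y)_\calG$, which is \eqref{eq:est_red_h_aposteriori_J}.

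The main obstacle, such as it is, is bookkeeping rather than conceptual: one must be careful that $S^* = \Bpa(\Apa)^{-1}$ really does hold as an operator from $\calY^*$ (here applied to $\res_w = \Apa(\bar w_h - \hat w) \in \calY^*$, so $(\Apa)^{-1}\res_w = \bar w_h - \hat w \in \calW$ is well-defined by invertibility of $A$ hence of $\Apa$) into $\calX = \calU$, and that the substitution $u = \bar u_h$, $g = C\bar y_h$ into \eqref{eq:est_red} legitimately produces the $\hat y$-based quantities — this uses $\hat y = A^{-1}B\bar u_h$ from \eqref{eq:yhat} so that $K\bar u_h = C\hat y$ and $K(\bar u_h - \bar u) = C(\hat y - \bar y)$. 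Everything else is the elementary lemma \eqref{abcd} (proved in the appendix) and the algebraic identity \eqref{estJ}, both already supplied.
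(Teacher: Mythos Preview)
Your proof is correct and follows exactly the route sketched in the paper: rewrite the right-hand side of \eqref{eq:est_red_h_aposteriori} via $S^*=\Bpa(\Apa)^{-1}$ and the residual definitions, then for \eqref{eq:est_red_h_aposteriori_s} pass from $C\hat y$ to $C\bar y_h$ using \eqref{CAres3}, the triangle inequality, and the elementary implication \eqref{abcd}, while for \eqref{eq:est_red_h_aposteriori_J} add the identity \eqref{estJ} to the halved middle inequality of \eqref{eq:est_red_h_aposteriori} and cancel the $\tfrac12\norm[]{G}{CA^{-1}\res_y}^2$ terms. Your write-up in fact spells out the bookkeeping (the choice of $a,b,c,d$ in \eqref{abcd} and the cancellation in the $J$-estimate) that the paper leaves implicit.
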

Here the factors $\sigma$ and $\gamma$ may be used to minimize the right hand side of the estimate. In the following, we will fix $\sigma=4$, $\gamma=2$ for simplicity.

At a first glance, estimate \eqref{eq:est_red_h_aposteriori} requires solution of state and adjoint equation on a fine grid for applying
$S^*$
and $CA^{-1}$, but this can be avoided in some relevant examples; see, e.g., \cref{subsec:example1} below.

\subsection{Application to inverse source problem}
\label{subsec:example1}

We now apply the estimate from \cref{aposteriori_hilbert} to the model problem \eqref{eq:ex1}. In this case, we have $\calU=L^2(\omega_c)$ as well as $\calY= H_0^1(\Omega)=\calW$, and the Tikhonov problem is given by
\begin{equation}\label{eq:ex1min}
    \left\{\begin{aligned}
            &\min_{y,u} \frac12 \norm[]{L2o}{y-g^\delta}^2 +\frac{\alpha}{2}\norm[]{L2c}{u}^2\\
            &\mbox{s.t. }-\Delta y = \chi_{\omega_c} u,\quad y|_{\partial\Omega}=0.
    \end{aligned}\right.
\end{equation}
Hence, using
\begin{align}
    \res_w&=\chi_{\omega_o}(\bar{y}_h-g^\delta)-\Delta\bar{w}_h,\\
    \res_u&=\alpha \bar{u}_h-\bar{w}_h\vert_{\omega_c},\\
    \res_y&=-\Delta\bar{y}_h-\chi_{\omega_c}\bar{u}_h,
\end{align}
estimates
\eqref{eq:est_red_h_aposteriori_s} and \eqref{eq:est_red_h_aposteriori_J} become
\begin{align}
    \qquad \alpha \norm[]{L2c}{\bar{u}_h-\bar{u}}
    +\norm[]{L2o}{\bar{y}_h-\bar{y}}
    &\leq
    \frac{2}{\alpha} \norm[]{L2c}{
    (-\Delta)^{-1}[\res_w] + \res_u}^2
    +4\norm[]{L2o}{(-\Delta)^{-1}[\res_y]}^2,\label{eq:est_red_h_aposteriori_ex1}\\
    J_\alpha(\bar{u}_h,\bar{y}_h)-J_\alpha(\bar{u},\bar{y})
    &\leq
    \begin{multlined}[t]
        \frac{1}{2\alpha} \norm[]{L2c}{(-\Delta)^{-1}[\res_w] + \res_u}^2 \\
        +
        (\bar{y}_h-g^\delta,(-\Delta)^{-1}[\res_y])_{{L}^{2}(\omega_o)}
        .
        \label{eq:est_red_h_aposteriori_ex1_J}
    \end{multlined}
\end{align}

It remains to describe how the right-hand sides can be evaluated for a given
discrete approximation $(\bar u_h,\bar y_h)$.
The residual $\res_w$ can be estimated using a conventional error estimator:
Observing that $\bar{w}_h$ and $\hat{w}$ solve the discretized and continuous Poisson equation with the same right-hand side $C^*(C\bar{y}_h-g^\delta)$, we can write
\begin{equation}\label{Ares1}
    (\Apa)^{-1} \res_w=(\bar{w}_h-\hat{w})
    =((\Apa_h)^{-1}-(\Apa)^{-1})C^*(C\bar{y}_h-g^\delta).
\end{equation}
Hence, using duality-based error estimators, e.g., from \cite[Sec.~2.4]{ainsworthoden}, with $\phi=A^{-1}BB^*(\bar{w}_h-\hat{w})\in\calY$, we obtain
\begin{equation}
    \begin{aligned}
        \norm[]{U}{B^*(A^*)^{-1}\res_w}&=\norm[]{U}{B^*(\bar{w}_h-\hat{w})}
        =\langle A\phi,\bar{w}_h-\hat{w}\rangle_{\calW^*,\calW}\\
        &=\langle \phi,A^*(\bar{w}_h-\hat{w})\rangle_{\calY,\calY^*}
        =\langle \phi-\calI^\calT\phi,A^*(\bar{w}_h-\hat{w})\rangle_{\calY,\calY^*},
    \end{aligned}
\end{equation}
where we have used Galerkin orthogonality in the last equality.
Since $\Omega$ is assumed to be convex and polyhedral,
we can apply \eqref{eq:estint} to $\phi \in H^2(\Omega)$ to obtain for all $K\in\calT_h$ the estimate
\begin{equation}
    \|\phi-\calI^\calT\phi\|_{L^2(K)} +
    h_K^{1/2} \norm[]{L2pK}{v-\calI^\calT v} \leq \Cint h_K^2 |\phi|_{H^2(K)}, 
\end{equation}
Due to $H^2$-regularity, we can also apply \eqref{eq:eststab} to further estimate
$|\phi|_{H^2(\Omega)} \leq \Cstb \norm[]{L2c}{\bar{w}_h-\hat{w}}$.
From \eqref{eq:yhatwhat} and integration by parts, we thus obtain
\begin{equation}
    \begin{aligned}[t]
        \norm[]{L2c}{\bar{w}_h-\hat{w}}^2
        &= \int_\Omega\nabla (\phi-\calI^\calT\phi)\cdot\nabla(\bar{w}_h-\hat{w})\, dx\\
        &= \sum_{K\in \calT_h} \left(\int_K(\phi -\calI^\calT\phi)\res_w\, dx
    +\int_{\partial K} (\phi -\calI^\calT\phi)\nabla\bar{w}_h\cdot \nu\, ds \right)\\
    &
    \le \Cint\sum_{K\in \calT_h} \left(    h_K^2 \norm[]{L2K}{\res_w}
        + \frac12 h_K^{3/2} \norm[]{L2pK}{\llbracket\nabla \bar{w}_h\cdot \nu\rrbracket}
    \right)|\phi|_{H^2(K)}
    \\
    &\leq
    c_\calT \left(  \sum_{K\in \calT_h} h_K^4 \norm[]{L2K}{\res_w}^2
    + \frac12 h_K^{3} \norm[]{L2pK}{\llbracket\nabla \bar{w}_h\cdot \nu\rrbracket}^2 \right)^{1/2}\norm[]{L2c}{\bar{w}_h-\hat{w}},
\end{aligned}
    \end{equation}
    where $c_\calT:=\Cint\Cstb$, and
    $\llbracket\cdot\rrbracket$ denotes the jump over the element boundary $\partial K$ with normal $\nu$.
    Canceling the norm on both sides then yields
    \begin{equation}\label{eq:Hm2}
        \begin{aligned}[t]
            \norm[]{L2c}{(-\Delta)^{-1}[\res_w]}
            &=\norm[]{L2c}{\bar{w}_h-\hat{w}}\\
            &\leq
            c_\calT \left(  \sum_{K\in \calT_h} h_K^4 \norm[]{L2K}{\res_w}^2
            + \frac12 h_K^{3} \norm[]{L2pK}{\llbracket\nabla \bar{w}_h\cdot \nu\rrbracket}^2 \right)^{1/2}\\
            &=:c_\calT \ \eta_{w}.
        \end{aligned}
    \end{equation}
    Note that although $\res_w$ is globally only an element of $H^{-1}(\Omega)$, we may take its elementwise $L^2(K)$ norm, since $\bar{w}_h$ is piecewise polynomial and therefore $\Delta (\bar{w}_h\vert_K)\in L^2(K)$. In case of piecewise linear finite elements, we just have $\norm[]{L2K}{\chi_{\omega_o}(\bar{y}_h-g^\delta)}^2$ in place of $\norm[]{L2K}{\res_w}^2$.

    The term containing $\res_u$ is straightforward to evaluate as a sum of elementwise contributions.
    Analogously to \eqref{Ares1}, we have a similar representation for $\res_y$ in  \eqref{CAres3}. As in \eqref{eq:Hm2},
    we can thus estimate
    \begin{equation}\label{eq:Hm2_res3}
        \begin{aligned}[t]
            \norm[]{L2o}{\bar{y}_h-\hat{y}}
            &\leq c_\calT \left(
            \sum_{K\in \calT_h} h_K^4 \norm[]{L2K}{\res_y}^2
        +  \frac12 h_K^{3} \norm[]{L2pK}{\llbracket\nabla \bar{y}_h\cdot \nu\rrbracket}^2\right)^{1/2}\\
        &=:c_\calT \eta_{y}.
    \end{aligned}
\end{equation}
Combining \eqref{eq:est_red_h_aposteriori_ex1} and \eqref{eq:est_red_h_aposteriori_ex1_J} with \eqref{eq:Hm2}, and \eqref{eq:Hm2_res3}, we thus obtain the explicit a posteriori estimates
\begin{equation}
    \begin{aligned}
        \alpha \norm[]{L2c}{\bar{u}_h-\bar{u}}^2
        +\norm[]{L2o}{\bar{y}_h-\bar{y}}^2
        &\leq
        \frac{2}{\alpha} \norm[]{L2c}{(-\Delta)^{-1}[\res_w] + \res_u}^2
        +4\norm[]{L2o}{\bar{y}_h-\hat{y}}^2\\
        &\leq
        \frac{2}{\alpha}
        \left(c_\calT  \eta_w + \|\res_u\|_{L^2(\omega_c)}^2\right)^2
        + 4 \left(c_\calT \eta_y\right)^2,\\
        J_\alpha(\bar{u}_h,\bar{y}_h)-J_\alpha(\bar{u},\bar{y})
        &\leq
        \frac{1}{2\alpha}
        \left(c_\calT  \eta_w + \|\res_u\|_{L^2(\omega_c)}^2\right)^2
        + c_\calT \eta_y \norm[]{L2o}{\bar{y}_h-g^\delta}.
    \end{aligned}
\end{equation}
\begin{remark}
    The $L^2$ inner product term in \eqref{eq:est_red_h_aposteriori_ex1_J} could in principle lead to a negative estimate of $J_\alpha(\bar{u}_h,\bar{y}_h)-J_\alpha(\bar{u},\bar{y})$, which by \eqref{eq:accuracy_cost} would mean that no refinement is required from the point of view of cost  functional accuracy.
    However, so far we have not found a means to reasonably evaluate this term as a possibly negative inner product (approximating $(-\Delta)^{-1}$ by its discretized version would just make the term vanish) and thus to estimate it by the Cauchy--Schwarz inequality.

    Estimates~\eqref{eq:Hm2} and \eqref{eq:Hm2_res3} give bounds on quantities defined on the possibly restricted subdomains $\omega_c$ and $\omega_o$, respectively. However, the estimators are sums of contributions on the whole domain $\Omega$, and the dependence on the subdomains $\omega_c$, $\omega_o$ only enters indirectly via the definition of $\res_w$, $\bar{w}_h$, $\res_y$, and $\bar{y}_h$. Still, this makes sense, since these estimators are supposed to indicate local refinement of the finite element mesh for $\bar{w}_h$ and $\bar{y}_h$ defined on all of $\Omega$.
\end{remark}

\begin{remark}
    Related results can be found in the literature on a posteriori error estimates for optimal control problems.
    We mention \cite{kohlssiebertroesch14,liuyan01}, where $H^1$-error estimates are used in contrast to the $L^2$-estimators employed above.
    Goal-oriented error estimators of dual-weighted-residual type are investigated in, e.g.,
    \cite{beckervexler04,GriesbaumKaltenbacherVexler08,KaltenbacherKirchnerVexler11,KaltenbacherKirchnerVexler13}.
\end{remark}

\section{Banach space norm constraint}\label{sec:errest_red_Ban1}

In this section, we consider as regularization term
\begin{equation}
    \calR_\alpha=\delta_{B_{1/\alpha}^{\calU}}, \qquad\text{and hence}
    \qquad  \calR_\alpha^*=\frac{1}{\alpha}\norm[]{X}{\cdot}.
\end{equation}
This setting is of particular interest for incorporating pointwise almost everywhere bounds on $u$ via $\calU=L^\infty(\omega_c)$; see \cref{subsec:example2} below.
Let us recall that in the setting $\cal U=\calX^*$, the operator $B$ is explicitly assumed to be an adjoint operator, which is the case in the example considered in \cref{subsec:example2}.

Using the definitions of \cref{sec:spaces} and standard arguments from convex analysis, we obtain for \eqref{eq:Tikh} the optimality conditions
\begin{equation}\label{eq:optsys_B}
    \left\{\begin{aligned}
            C^*(C\bar{y}-g^\delta)+\Apa\bar{w}&=0,\\
            \bar{u}\in B_{1/\alpha}^{\calU}\quad\text{and}\quad\langle u-\bar{u}, \Bpa\bar{w}\rangle_{\calU,\calX}&\leq0 \quad \forall u\in B_{1/\alpha}^{\calU},\\
            A\bar{y}-B\bar{u}&=0.
    \end{aligned}\right.
\end{equation}
The corresponding discrete optimality conditions are
\begin{equation}\label{eq:optsys_discr_B}
    \left\{\begin{aligned}
            R_{\calY_h} \left(C^*(C\bar{y}_h-g^\delta)+ \Apa\bar{w}_h\right)&=0,\\
            \bar{u}_h\in B_{1/\alpha}^{\calU_h}\quad\text{and}\quad\langle u_h-\bar{u}_h, \Bpa\bar{w}_h\rangle_{\calU,\calX}&\leq0 \quad \forall u_h\in B_{1/\alpha}^{\calU_h},\\
            R_{\calW_h}\left(A\bar{y}_h-B\bar{u}_h\right)&=0.
    \end{aligned}\right.
\end{equation}

\subsection{Error estimates}

Setting
$\phi_\alpha(u_1,u_2)=0$,
we obtain from \cref{th:Repin} that the solution $\bar{u}$ to \eqref{eq:minred} satisfies
\begin{equation}\label{eq:est_red_B}
    \begin{aligned}
        \norm[]{G}{Ku-K\bar{u}}^2
        &\leq2\left(J_\alpha(u,Ku)-J_\alpha(\bar{u},K\bar{u})\right)\\
        &\leq\frac{2}{\alpha}\norm[]{X}{K^*g^*}
        +\norm[]{G}{Ku-g^\delta}^2+\norm[]{G}{ g^*-g^\delta}^2-\norm[]{G}{g^\delta}^2
        \\
        &=
        \frac{2}{\alpha}\norm[]{X}{K^* (g-g^\delta)}
        +2\langle u, K^* (g-g^\delta)\rangle_{\calU,\calX}
        +\norm[]{G}{Ku-g}^2
    \end{aligned}
\end{equation}
for any $u\in \calU$ and $g^*:=g^\delta -g \in \calG$ for any $g\in \calG$.
Inserting $u=\bar{u}_h$ and $g=C\bar{y}_h$ with $(\bar{u}_h,\bar{y}_h,\bar{w}_h) \in \calU_h\times \calY_h\times \calW_h$ satisfying \eqref{eq:optsys_discr_B}, we obtain
\begin{equation}\label{eq:est_red_Bh}
    \begin{aligned}[t]
        {\norm[]{G}{C\hat{y}-C\bar{y}}^2}
        &\leq2\left(J_\alpha(\bar{u}_h,\hat{y})-J_\alpha(\bar{u},\bar{y})\right)\\
        &\leq
        \frac{2}{\alpha} \norm[]{X}{\Bpa \hat{w}}
        - 2\langle \bar{u}_h, \Bpa \hat{w}\rangle_{\calU,\calX}
        +\norm[]{G}{CA^{-1}(A\bar{y}_h-B\bar{u}_h)}^2,
    \end{aligned}
\end{equation}
with $\hat{y}$ and $\hat w$ defined as in \eqref{eq:yhat} and \eqref{eq:yhatwhat}, respectively.
Note that by $\norm[]{U}{\bar{u}_h}\leq \frac{1}{\alpha}$, the term $\frac{1}{\alpha} \norm[]{X}{\Bpa \hat{w}} - \langle \bar{u}_h, \Bpa \hat{w}\rangle_{\calU,\calX}$ is indeed nonnegative.

For the first and last relation in \eqref{eq:optsys_discr_B}, we can define the residuals $\res_w$ and $\res_y$ as in \eqref{eq:res123} and, taking into account
\eqref{CAres3}--\eqref{estJ},
obtain a first a posteriori estimate.
\begin{proposition}\label{prop40}
    Let $\calR_\alpha = \delta_{B_{1/\alpha}^{\calU}}$. Then the  minimizers $(\bar{u},\bar{y})$ of \eqref{eq:Tikh} and  $(\bar{u}_h,\bar{y}_h)$ of  \eqref{eq:Tikh_h} satisfy the estimates
    \begin{align}
        {\norm[]{G}{C\bar{y}_h-C\bar{y}}^2}
        &\leq
        \frac{4}{\alpha} \norm[]{X}{\Bpa \hat{w}}
        - 4\langle \bar{u}_h, \Bpa \hat{w}\rangle_{\calU,\calX}
        +4\norm[]{G}{CA^{-1}\res_y}^2,\\
        J_\alpha(\bar{u}_h,\bar{y}_h)-J_\alpha(\bar{u},\bar{y})
        &\leq
        \frac{1}{\alpha} \norm[]{X}{\Bpa \hat{w}}
        - \langle \bar{u}_h, \Bpa \hat{w}\rangle_{\calU,\calX}
        +\norm[]{G}{CA^{-1}\res_y}\norm[]{G}{C\bar{y}_h-g^\delta}.
    \end{align}
    with $\res_y$ as in \eqref{eq:res123}.
\end{proposition}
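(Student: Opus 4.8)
The plan is to assemble the two estimates from ingredients that are already available: the bound \eqref{eq:est_red_Bh}, the residual identity \eqref{CAres3}, the splitting identity \eqref{estJ}, the elementary implication \eqref{abcd} (which I would use with $\sigma=4$, $\gamma=2$, admissible by \eqref{rhosigma}), and the Cauchy--Schwarz inequality. First I would record that, by \eqref{eq:res123}, $\res_y=A\bar{y}_h-B\bar{u}_h$, so that the chain \eqref{eq:est_red_Bh} together with \eqref{CAres3} reads
\begin{equation*}
    \norm[]{G}{C\hat{y}-C\bar{y}}^2\leq c+d^2,\qquad 2\bigl(J_\alpha(\bar{u}_h,\hat{y})-J_\alpha(\bar{u},\bar{y})\bigr)\leq c+d^2,
\end{equation*}
where $c:=\tfrac{2}{\alpha}\norm[]{X}{\Bpa\hat{w}}-2\langle\bar{u}_h,\Bpa\hat{w}\rangle_{\calU,\calX}$ and $d:=\norm[]{G}{CA^{-1}\res_y}$. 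Here $c\geq0$ because $\bar{u}_h\in B_{1/\alpha}^{\calU_h}\subset B_{1/\alpha}^{\calU}$, which is the observation already made right after \eqref{eq:est_red_Bh}.

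For the first estimate I would apply \eqref{abcd} with $a=0$ and $b:=\norm[]{G}{C\hat{y}-C\bar{y}}$ to the inequality $a+b^2\leq c+d^2$, obtaining $(b+d)^2\leq2c+4d^2$. Since $C\bar{y}_h-C\hat{y}=CA^{-1}\res_y$ by \eqref{CAres3}, the triangle inequality gives $\norm[]{G}{C\bar{y}_h-C\bar{y}}\leq d+b$, and squaring yields $\norm[]{G}{C\bar{y}_h-C\bar{y}}^2\leq2c+4d^2$, which is the asserted bound once the definition of $c$ is unfolded.

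For the second estimate I would split $J_\alpha(\bar{u}_h,\bar{y}_h)-J_\alpha(\bar{u},\bar{y})=\bigl(J_\alpha(\bar{u}_h,\bar{y}_h)-J_\alpha(\bar{u}_h,\hat{y})\bigr)+\bigl(J_\alpha(\bar{u}_h,\hat{y})-J_\alpha(\bar{u},\bar{y})\bigr)$. The second summand is at most $\tfrac12 c+\tfrac12 d^2$ by the inequality recorded above, while \eqref{estJ} combined with \eqref{CAres3} identifies the first summand as $(C\bar{y}_h-g^\delta,CA^{-1}\res_y)_\calG-\tfrac12 d^2$, which by Cauchy--Schwarz is at most $\norm[]{G}{CA^{-1}\res_y}\,\norm[]{G}{C\bar{y}_h-g^\delta}-\tfrac12 d^2$. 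Adding the two bounds, the terms $\pm\tfrac12 d^2$ cancel and $\tfrac12 c$ unfolds to the claimed right-hand side. There is no deep obstacle here, since all of the analytic work is already done in \cref{th:Repin} and in the derivation of \eqref{eq:est_red_Bh}; the points requiring care are purely bookkeeping, namely verifying $c\geq0$ so that \eqref{abcd} is applicable, and noticing that the factor $2$ in the cost-functional inequality is exactly what makes the two copies of $\tfrac12\norm[]{G}{CA^{-1}\res_y}^2$ (one from the split, one with a minus sign in \eqref{estJ}) cancel.
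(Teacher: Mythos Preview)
Your proof is correct and follows essentially the same approach as the paper's: the paper indicates only that one ``tak[es] into account \eqref{CAres3}--\eqref{estJ}'' in passing from \eqref{eq:est_red_Bh} to the proposition, and your bookkeeping is exactly what this entails. The one minor addition you make explicit is the application of Cauchy--Schwarz to the inner-product term from \eqref{estJ}, which the paper leaves implicit but is clearly intended since the stated bound features $\norm[]{G}{CA^{-1}\res_y}\norm[]{G}{C\bar{y}_h-g^\delta}$ rather than $(C\bar{y}_h-g^\delta,CA^{-1}\res_y)_\calG$.
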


If a duality mapping ${\calJ}^{\calX}(x)\in \partial\norm[]{X}{\cdot}(x)$ exists (e.g., if $\calU=\calX^*$), we can also define a residual for the second relation in \eqref{eq:Tikh_h} by
\begin{equation}\label{eq:res2til}
    {\res}_u:=\alpha \bar{u}_h - {\calJ}^{\calX}(\Bpa \bar{w}_h).
\end{equation}
From $\langle{\calJ}^{\calX} (\Bpa \bar{w}_h),\Bpa \bar w_h\rangle_{\calU,\calX}=\norm[]{X}{\Bpa \bar w_h}$ it follows that
$\langle {\res}_u,\Bpa \bar w_h\rangle_{\calU,\calX}\le0$.
Then we can estimate
\begin{equation}
    \norm[]{X}{\Bpa \hat{w}}
    + \alpha\langle \bar{u}_h, \Bpa \hat{w}\rangle_{\calU,\calX}
    =\langle -{\res}_u - ({\calJ}^{\calX} (\Bpa \bar{w}_h) -{\calJ}^{\calX}( \Bpa \hat w)) ,\Bpa \hat{w}\rangle_{\calU,\calX}.
\end{equation}
By construction we have that
\begin{equation}
    \langle {\calJ}^{\calX} (\Bpa \bar{w}_h) -{\calJ}^{\calX}( \Bpa \hat w) ,\Bpa \bar w_h\rangle_{\calU,\calX}
    = \|\Bpa \bar{w}_h\|_\calX - \langle {\calJ}^{\calX}( \Bpa \hat w) ,\Bpa \bar w_h\rangle_{\calU,\calX} \ge 0
\end{equation}
Hence it follows that
\begin{equation}
\langle {\calJ}^{\calX} (\Bpa \hat w) -{\calJ}^{\calX}( \Bpa \bar w_h)) ,\Bpa \hat{w}\rangle_{\calU,\calX}
    \le \langle {\calJ}^{\calX} (\Bpa \hat w) -{\calJ}^{\calX}( \Bpa \bar w_h)) ,\Bpa (\hat{w}-w_h)\rangle_{\calU,\calX}.
\end{equation}
Introducing the symmetric Bregman distance of $\|\cdot\|_\calU$ defined as
\begin{equation}\label{dsymmnx}
\dsymmnx(\Bpa \hat w, \Bpa \bar w_h) :=\langle {\calJ}^{\calX} (\Bpa \hat w) -{\calJ}^{\calX}( \Bpa \bar w_h)) ,\Bpa (\hat{w}-w_h)\rangle_{\calU,\calX},
\end{equation}
we obtain the estimate
\begin{equation}
    \norm[]{X}{\Bpa \hat{w}}
    - \alpha\langle \bar{u}_h, \Bpa \hat{w}\rangle_{\calU,\calX}
    \le \langle -{\res}_u,\Bpa \hat{w}\rangle_{\calU,\calX} + \dsymmnx(\Bpa \hat w, \Bpa \bar w_h).
\end{equation}

Using \eqref{CAres3} in \eqref{eq:est_red_Bh} together with
the definitions of $ \res_u$ and $\dsymmnx$ yields the following estimates.
\begin{proposition}\label{prop41}
    Let $\calU=\calX^*$ and $\calR_\alpha = \delta_{B_{1/\alpha}^{\calU}}$. Then the  minimizers $(\bar{u},\bar{y})$ of \eqref{eq:Tikh} and  $(\bar{u}_h,\bar{y}_h)$ of  \eqref{eq:Tikh_h} satisfy the estimates
    \begin{align}
        \norm[]{G}{C\bar{y}_h-C\bar{y}}^2
        &\leq
        \frac4\alpha \langle -{\res}_u,\Bpa \hat{w}\rangle_{\calU,\calX} + \frac4\alpha\dsymmnx(\Bpa \hat w)             +4\norm[]{G}{CA^{-1}\res_y}^2,\label{eq:est_red_Bh_s}\\
        \qquad J_\alpha(\bar{u}_h,\bar{y}_h)-J_\alpha(\bar{u},\bar{y})
        &\leq
        \frac1\alpha \langle -{\res}_u,\Bpa \hat{w}\rangle_{\calU,\calX} + \frac1\alpha\dsymmnx(\Bpa \hat w, \Bpa \bar w_h)           \label{eq:est_red_Bh_s_J}\\
        \MoveEqLeft[-1]  +\norm[]{G}{CA^{-1}\res_y}\norm[]{G}{C\bar{y}_h-g^\delta},
    \end{align}
    with
    $\res_y$ as in \eqref{eq:res123} and $\res_u$ as in \eqref{eq:res2til}.
\end{proposition}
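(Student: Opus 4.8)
The plan is to run the scheme of \cref{prop40}, refining the treatment of the constraint term by means of the duality-mapping residual $\res_u$ from \eqref{eq:res2til}. The starting point is \eqref{eq:est_red_Bh} --- that is, \cref{th:Repin} with $\phi_\alpha\equiv0$ (so that \eqref{eq:strongconvexity} holds trivially), evaluated at $v=\bar u_h$ and $g^*=g^\delta-C\bar y_h$ and with $\calR_\alpha^*=\frac{1}{\alpha}\norm[]{X}{\cdot}$ and the explicit form of $G$ inserted --- which, writing $\res_y=A\bar y_h-B\bar u_h$, reads
\begin{equation}
\begin{aligned}
\norm[]{G}{C\hat y-C\bar y}^2
&\leq 2\bigl(J_\alpha(\bar u_h,\hat y)-J_\alpha(\bar u,\bar y)\bigr)\\
&\leq\frac{2}{\alpha}\bigl(\norm[]{X}{\Bpa\hat w}-\alpha\langle\bar u_h,\Bpa\hat w\rangle_{\calU,\calX}\bigr)+\norm[]{G}{CA^{-1}\res_y}^2.
\end{aligned}
\end{equation}
So the argument reduces to bounding the constraint gap $\norm[]{X}{\Bpa\hat w}-\alpha\langle\bar u_h,\Bpa\hat w\rangle_{\calU,\calX}$ (which is $\geq0$ since $\norm[]{U}{\bar u_h}\leq\frac{1}{\alpha}$) by quantities computable from the discrete solution $(\bar u_h,\bar y_h,\bar w_h)$.

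The key step is to show that this gap is bounded above by $\langle-\res_u,\Bpa\hat w\rangle_{\calU,\calX}+\dsymmnx(\Bpa\hat w,\Bpa\bar w_h)$. Since $\calU=\calX^*$, the assumed duality mapping satisfies $\calJ^\calX(x)\in\partial\norm[]{X}{\cdot}(x)\subset\calX^*=\calU$, with $\langle\calJ^\calX(x),x\rangle_{\calU,\calX}=\norm[]{X}{x}$ and $\norm[]{U}{\calJ^\calX(x)}=1$. Using $\alpha\bar u_h=\res_u+\calJ^\calX(\Bpa\bar w_h)$ and $\norm[]{X}{\Bpa\hat w}=\langle\calJ^\calX(\Bpa\hat w),\Bpa\hat w\rangle_{\calU,\calX}$, one obtains
\begin{equation}
\begin{aligned}
\norm[]{X}{\Bpa\hat w}-\alpha\langle\bar u_h,\Bpa\hat w\rangle_{\calU,\calX}
&=\langle-\res_u,\Bpa\hat w\rangle_{\calU,\calX}\\
&\quad+\langle\calJ^\calX(\Bpa\hat w)-\calJ^\calX(\Bpa\bar w_h),\Bpa\hat w\rangle_{\calU,\calX}.
\end{aligned}
\end{equation}
I would then bound the last pairing from above by subtracting the quantity $\langle\calJ^\calX(\Bpa\hat w)-\calJ^\calX(\Bpa\bar w_h),\Bpa\bar w_h\rangle_{\calU,\calX}$, which is $\leq0$ because $\langle\calJ^\calX(\Bpa\hat w),\Bpa\bar w_h\rangle_{\calU,\calX}\leq\norm[]{U}{\calJ^\calX(\Bpa\hat w)}\norm[]{X}{\Bpa\bar w_h}=\langle\calJ^\calX(\Bpa\bar w_h),\Bpa\bar w_h\rangle_{\calU,\calX}$; this turns the pairing into $\langle\calJ^\calX(\Bpa\hat w)-\calJ^\calX(\Bpa\bar w_h),\Bpa(\hat w-\bar w_h)\rangle_{\calU,\calX}=\dsymmnx(\Bpa\hat w,\Bpa\bar w_h)$, which is itself $\geq0$ by monotonicity of the subgradient $\calJ^\calX$. (The same two identities also give $\langle\res_u,\Bpa\bar w_h\rangle_{\calU,\calX}\leq0$.) I expect this to be the only genuinely delicate point: one has to keep track that for $\calU=\calX^*$ the relevant pairing is $\langle\cdot,\cdot\rangle_{\calU,\calX}$ rather than the usual $\langle\cdot,\cdot\rangle_{\calX^*,\calX}$, that well-definedness of $\calJ^\calX$ is precisely what the hypothesis $\calU=\calX^*$ provides, and that monotonicity must be applied to the correct pair of arguments; the remaining bookkeeping is routine.

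It then remains to close both estimates as in \cref{aposteriori_hilbert} and \cref{prop40}. For \eqref{eq:est_red_Bh_s}, I would substitute the gap bound into the first display, use the identity $C\bar y_h-C\hat y=CA^{-1}\res_y$ from \eqref{CAres3} together with the triangle inequality $\norm[]{G}{C\bar y_h-C\bar y}\leq\norm[]{G}{C\hat y-C\bar y}+\norm[]{G}{CA^{-1}\res_y}$, and then apply the elementary implication \eqref{abcd}--\eqref{rhosigma} (with $a=0$, $b=\norm[]{G}{C\hat y-C\bar y}$, $d=\norm[]{G}{CA^{-1}\res_y}$, $\gamma=2$, $\sigma=4$). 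For the cost-functional estimate \eqref{eq:est_red_Bh_s_J}, I would split $J_\alpha(\bar u_h,\bar y_h)-J_\alpha(\bar u,\bar y)=\bigl(J_\alpha(\bar u_h,\bar y_h)-J_\alpha(\bar u_h,\hat y)\bigr)+\bigl(J_\alpha(\bar u_h,\hat y)-J_\alpha(\bar u,\bar y)\bigr)$, evaluate the first bracket exactly via \eqref{estJ} (again using $C\bar y_h-C\hat y=CA^{-1}\res_y$), bound the second bracket by one half of the right-hand side of the first display after inserting the gap bound, note that the $-\frac{1}{2}\norm[]{G}{CA^{-1}\res_y}^2$ produced by \eqref{estJ} cancels the $\frac{1}{2}\norm[]{G}{CA^{-1}\res_y}^2$ from the first display, and finally estimate the remaining inner-product term $(C\bar y_h-g^\delta,CA^{-1}\res_y)_\calG$ by the Cauchy--Schwarz inequality. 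This produces both claimed estimates, with $\res_y$ as in \eqref{eq:res123} and $\res_u$ as in \eqref{eq:res2til}.
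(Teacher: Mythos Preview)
Your proposal is correct and follows essentially the same route as the paper: you start from \eqref{eq:est_red_Bh}, rewrite the constraint gap $\norm[]{X}{\Bpa\hat w}-\alpha\langle\bar u_h,\Bpa\hat w\rangle_{\calU,\calX}$ via $\res_u$ and the duality mapping, bound the cross term by the symmetric Bregman distance using $\langle\calJ^\calX(\Bpa\bar w_h)-\calJ^\calX(\Bpa\hat w),\Bpa\bar w_h\rangle_{\calU,\calX}\geq0$, and then close both estimates with \eqref{CAres3}, \eqref{abcd}--\eqref{rhosigma}, and \eqref{estJ} exactly as in \cref{aposteriori_hilbert} and \cref{prop40}. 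The only difference is that you spell out the final bookkeeping more explicitly than the paper, which simply refers back to \eqref{CAres3} and the definitions.
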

Let us remark that due to \eqref{eq:res123}, the unknown $\hat w$ can replaced  by $\bar w_h -(A^{*})^{-1}\res_w$.
Hence, the components of the error estimate are fully available in numerical implementations,
as we will show in more detail in \cref{subsec:example2}.

If a variational discretization, i.e., $\calU_h=\calU$, is used, then
from \eqref{eq:optsys_discr_B} we obtain $\Bpa \bar w_h \in \partial \delta_{B_{1/\alpha}^{\calU}}(\bar u_h)$,
which is equivalent to $\bar u_h \in \partial \|\cdot\|_\calX(\Bpa\bar w_h)$.
This implies that $\res_u=0$, and hence \eqref{eq:est_red_Bh_s} and \eqref{eq:est_red_Bh_s_J} reduce to
\begin{align}
    \norm[]{G}{C\bar{y}_h-C\bar{y}}^2
    &\leq \frac4\alpha\dsymmnx(\Bpa \hat w, \Bpa \bar w_h)+4\norm[]{G}{CA^{-1}\res_y}^2,\label{eq:est_red_Bh_s_var}\\
    J_\alpha(\bar{u}_h,\bar{y}_h)-J_\alpha(\bar{u},\bar{y})
    &\leq \frac1\alpha\dsymmnx(\Bpa \hat w, \Bpa \bar w_h)+\norm[]{G}{CA^{-1}\res_y}\norm[]{G}{C\bar{y}_h-g^\delta}.\label{eq:est_red_Bh_s_var_J}
\end{align}

\subsection{Application to inverse source problem}\label{subsec:example2}

We now apply the estimate from \cref{prop41} to the model problem \eqref{eq:ex1} for the case of Ivanov regularization. In this case, we have $\calU=L^\infty(\omega_c)$ and $\calX=L^1(\omega_c)$, i.e., $\calU=\calX^*$, and hence the duality mapping is given by
\begin{equation}
    {\calJ}^{\calX} (\Bpa w)=\sign(w\vert_{\omega_c}).
\end{equation}
As before, we take $\calY= H_0^1(\Omega)=\calW$. The Ivanov problem is then given by
\begin{equation}\label{eq:ex1bmin}
    \left\{\begin{aligned}
            &\min_{y,u} \frac12 \norm[]{L2o}{y-g^\delta}^2
            \quad \text{ s.t. }\quad|u(x)|\leq\frac{1}{\alpha}\quad\text{for a.e. } x\in\omega_c\\
            &\text{and }-\Delta y = \chi_{\omega_c} u,\quad y|_{\partial\Omega}=0.
    \end{aligned}\right.
\end{equation}
The residuals used in \cref{prop41} are now given by
\begin{align}
    \res_w&:=\chi_{\omega_o}(\bar{y}_h-g^\delta)-\Delta\bar{w}_h,\\
    {\res}_u&:=\alpha \bar{u}_h-\sign(\bar w_h\vert_{\omega_c}),\\
    \res_y&:=-\Delta\bar{y}_h-\chi_{\omega_c}\bar{u}_h.
\end{align}
We will consider the case of variational discretization for simplicity,
where we can make use of the estimate \eqref{eq:est_red_Bh_s_var}.
Since the term containing $\res_y$ in \eqref{eq:est_red_Bh_s_var} can be estimated by \eqref{eq:Hm2_res3}, it
only remains to consider the term containing $\dsymmnx$, which in this setting can be estimated by
\begin{equation}
    \dsymmnx(\Bpa \hat w, \Bpa \bar w_h)=
    \frac{1}{\alpha}\scalprod{LinftycL1c}{\sign(\hat{w})-\sign(\bar{w}_h)}{\hat{w}-\bar{w}_h}
    \leq \frac{2}{\alpha}\norm[]{L1c}{\hat{w}-\bar{w}_h}.
\end{equation}
(Note that we cannot expect smallness of $\norm[]{Linftyc}{\sign(\hat{w})-\sign(\bar{w}_h)}$ directly, since continuity of the $\sign$ operator cannot be quantified on $\calX=L^1(\omega_c)$.)

In order to estimate the $\m{L1c}$-norm of $\hat{w}-\bar{w}_h$, we introduce
\begin{equation}
    z := (-\Delta)^{-1}[\chi_{\omega_c} \sign(\hat{w}-\bar{w}_h) ]\ \in\  W^{2,p}(\Omega)\cap H^1_0(\Omega) \quad \forall p<\infty.
\end{equation}
We assume from here on that $\Omega\subset \R^2$ is polygonal with interior angles of at most $\frac\pi2$.
In this case, we obtain from \cite[Thm.~1]{dipliniotemam2015} that
\begin{equation}\label{eq:schatzwahlbin}
    \|z\|_{W^{2,p}(\Omega)} \le c_S \, p \, \norm[]{Linf}{\sign(\hat{w}-\bar{w}_h)} \leq c_S\,p
\end{equation}
holds for all $p\ge2$ with a constant $c_S>0$ independent of $p$.
In case that $\Omega$ does not allow for such a regularity result, and \eqref{eq:schatzwahlbin} only holds for $p=2$,
we can use the $L^2$-error estimate of \cref{subsec:example1}.

Let $\calI^\calT z$ be the piecewise linear interpolant of $z$.
Then we have from \cite[Thm 3.1.6]{Ciarlet78} together with \eqref{eq:schatzwahlbin} for all $p>d$ the estimate
\begin{equation}\label{eq42_interp}
    \norm[]{LinftyK}{z - \calI^\calT z} +  \norm[]{LinftyFF}{z - \calI^\calT z} \le c_I \, h_K^{2-d/p}  \norm[]{W2pK}{z} \leq c_Ic_S\,p\, h_K^{2-d/p}
\end{equation}
with a  constant $c_I>0$ depending only on the chosen finite element family.
Using the definition of $z$, we obtain
\begin{equation}
    \begin{aligned}
        \norm[]{L1c}{\hat{w}-\bar{w}_h}
        &=(\nabla z,\nabla (\hat{w}-\bar{w}_h))_{\m{L2}}\\
        &=(\nabla (z-\calI^\calT z),\nabla (\hat{w}-\bar{w}_h))_{\m{L2}}
        \\
        &=-( z-\calI^\calT z,\bar{y}_h-g^\delta)_{\m{L2o}} -(\nabla (z-\calI^\calT z),\nabla\bar{w}_h)_{\m{L2}},
    \end{aligned}
\end{equation}
where
we have used Galerkin orthogonality and the fact that the interpolation operator $\calI^\calT:{C}_b(\overline{\Omega})\to\calY_h$ can indeed be applied to
$z\in W^{2,p}(\Omega)\cap H_0^1(\Omega)\hookrightarrow {C}_b(\overline{\Omega})$.
Here and below, $(\cdot,\cdot)_{L^2}$ denotes the $L^2$ inner product.
Now we integrate by parts on each element to obtain
\begin{equation}\label{etaw_normcons}
    \begin{aligned}[t]
        \norm[]{L1c}{\hat{w}-\bar{w}_h}
        &=-\sum_{K\in \calT_h}
        \Bigl((z-\calI^\calT z,
            -\Delta \bar{w}_h+\chi_{\omega_o}(\bar{y}_h-g^\delta))_{\m{L2K}}\\
            \MoveEqLeft[-1] -\int_{\partial K} \nabla \bar{w}_h\cdot \nu (z-\calI^\calT z)\,ds
        \Bigr)\\
        &\leq \sum_{K\in \calT_h} \Big(\norm[]{LinftyK}{z-\calI^\calT z}
        \ \norm[]{L1K}{-\Delta \bar{w}_h+\chi_{\omega_o}(\bar{y}_h-g^\delta)}
        \\
        \MoveEqLeft[-1]
        +\norm[]{LinftypK}{z-\calI^\calT z}
    \norm[]{L1pK}{\llbracket\nabla \bar{w}_h\cdot \nu\rrbracket} \Big)
    \\
    &\leq c_Ic_S
    \sum_{K\in \calT_h}
    p_K \,
    h_K^{2-\frac d{p_K}}
    \ \left(\norm[]{L1K}{-\Delta \bar{w}_h+\chi_{\omega_o}(\bar{y}_h-g^\delta)}
        +
        \norm[]{L1pK}{\llbracket\nabla \bar{w}_h\cdot \nu\rrbracket}
    \right)
\end{aligned}
\end{equation}
where we have used \eqref{eq42_interp} with $p_K\ge d$ individually for each element $K\in\calT_h$.
(As in \eqref{eq:Hm2}, the term $\Delta \bar{w}_h$ vanishes in case of piecewise linear finite elements.)
Choosing now $p_K \sim d\,|\log(h_K)|$ yields
\begin{equation}
    \norm[]{L1c}{\hat{w}-\bar{w}_h}\le c_Ic_S
    \sum_{K\in \calT_h}
    |\log h_K| \,
    h_K^{2}
    \left(\norm[]{L1K}{\rho_w}
        +
        \norm[]{L1pK}{\llbracket\nabla \bar{w}_h\cdot \nu\rrbracket}
    \right)=:c_\calT \eta_w.
\end{equation}
With the help of this residual-based error estimate and of \eqref{eq:Hm2_res3}, the error estimates \eqref{eq:est_red_Bh_s_var} and \eqref{eq:est_red_Bh_s_var_J} can be computed.

\section{Banach space norm regularization}\label{sec:errest_red_Ban2}

In this section, we consider as regularization term
\begin{equation}
    \calR_\alpha=\alpha \norm[]{U}{\cdot}, \qquad\text{and hence}\qquad
    \calR_\alpha^*=\delta_{B_\alpha^\calX}(\cdot).
\end{equation}
This setting is of particular interest for promoting sparsity of $u$ via $\calU=\mathcal{M}(\Omega)$; see \cref{subsec:example3}.
Again, in case $\calU=\calX^*$ we explicitly assume that $B$ is an adjoint operator.

As above, we obtain for \eqref{eq:Tikh} the  optimality conditions
\begin{equation}\label{eq:optsys_C}
    \left\{\begin{aligned}
            C^*(C\bar{y}-g^\delta)+\Apa\bar{w}&=0,\\
            \Bpa\bar{w}\in B_\alpha^{\calX}\quad\text{and}\quad
            \langle\bar{u},u^*- \Bpa\bar{w}\rangle_{\calU,\calX}&\leq0 \quad \forall u^*\in B_\alpha^{\calX},
            \\
            A\bar{y}-B\bar{u}&=0.
    \end{aligned}\right.
\end{equation}
We again consider a discretization of this system.
In the following, let $(\bar y_h,\bar u_h,\bar w_h)$ be a discrete approximation of the solution of \eqref{eq:optsys_C} given by
\begin{equation}\label{eq:optsys_discr_C}
    \left\{\begin{aligned}
            R_{\calY_h} \left(C^*(C\bar{y}_h-g^\delta)+ \Apa\bar{w}_h\right)&=0,\\
            R_{\calW_h}\left(A\bar{y}_h-B\bar{u}_h\right)&=0,
    \end{aligned}\right.
\end{equation}
together with a discretization of the second relation of \eqref{eq:optsys_C}, which however is intimately
linked to the choice of the space $\calU_h$ and the discrete approximation
of $\Bpa w\in B_\alpha^{\calX}$.
We refer to \cref{subsec:example3} concerning details for the specific choice $\calU=\mathcal{M}(\Omega)$.

\subsection{Error estimates}\label{sec:errest_red_Ban2_est}

Setting again $\phi_\alpha(u_1,u_2)=0$, we obtain from \cref{th:Repin} that the solution $\bar{u}$ to \eqref{eq:minred} satisfies
\begin{equation}\label{eq:est_red_C}
    \begin{aligned}
        {\norm[]{G}{Ku-K\bar{u}}^2}&\leq2\left(J_\alpha(u,Ku)-J_\alpha(\bar{u},K\bar{u})\right)\\
                                 &\leq 2\alpha \norm[]{U}{u}
        +\norm[]{G}{Ku-g^\delta}^2+\norm[]{G}{ g^*-g^\delta}^2-\norm[]{G}{g^\delta}^2
        \\
        &= 2\alpha \norm[]{U}{u}
        +2\langle u, K^* (g-g^\delta)\rangle_{\calU,\calX}
        +\norm[]{G}{Ku-g}^2.
    \end{aligned}
\end{equation}
for any $u\in \calU$ and $g^*:=g^\delta -g \in \calG$ for any $g\in \calG$ such that  $K^*g^*\in B_{\alpha}^{\calX}$.

Similarly as before, we set $u=\bar u_h$.
However, the choice $g=C\bar{y}_h$ is not possible, as $K^*(C\bar{y}_h-g^\delta)\notin B_{\alpha}^{\calX}$ in general.
Hence, we introduce a scaling factor $\kappa>0$ such that
$g- g^\delta = \kappa(C\bar{y}_h-y^\delta)$
satisfies
\begin{equation}
    \kappa K^*(C\bar{y}_h- g^\delta) = \kappa B^*(A^*)^{-1}C^*(C\bar{y}_h-g^\delta)
    = \kappa \Bpa \hat w \in B_{\alpha}^{\calX}
\end{equation}
with $\hat w$ as in \eqref{eq:yhatwhat}.
It thus suffices to choose
\begin{equation}\label{eq:kappa}
    \kappa=\min\left\{\frac{\alpha}{\norm[]{X}{\Bpa \hat{w}}},\ 1\right\}.
\end{equation}
The estimation of $\kappa$ will be discussed below; see \eqref{eq:est_kappam1_wh} and \eqref{eq:etakappa}.
Inserting $u=\bar{u}_h$ and $g=\kappa C\bar{y}_h+(1-\kappa)g^\delta$ with $(\bar{u}_h,\bar{y}_h,\bar{w}_h) \in \calU_h\times \calY_h\times \calW_h$ satisfying \eqref{eq:optsys_discr_C}, we obtain
\begin{equation}\label{eq:est_red_Ch}
    \begin{aligned}
        \norm[]{G}{C\hat{y}-C\bar{y}}^2
        &\leq2\left(J_\alpha(\bar{u}_h,\hat{y})-J_\alpha(\bar{u},\bar{y})\right)\\
        &\leq
        2\alpha \norm[]{U}{\bar{u}_h}
        - 2\langle \bar{u}_h, \kappa\Bpa \hat{w}\rangle_{\calU,\calX}
        +\norm[]{G}{CA^{-1}(A\bar{y}_h-B\bar{u}_h)+(\kappa-1)(C\bar{y}_h-g^\delta)}^2\\
        & =
        2(\alpha \norm[]{U}{\bar{u}_h}
            - \langle \bar{u}_h, \Bpa \bar w_h\rangle_{\calU,\calX}
            + \kappa\langle \bar{u}_h, \Bpa(\bar w_h -\hat{w})\rangle_{\calU,\calX}\\
        \MoveEqLeft[-1]        + (1-\kappa) \langle \bar{u}_h, \Bpa \bar w_h\rangle_{\calU,\calX})
        +\norm[]{G}{CA^{-1}\res_y+(\kappa-1)(C\bar{y}_h-g^\delta)}^2.
    \end{aligned}
\end{equation}
Note that by dual feasibility of $\kappa\hat{w}$, the term $\alpha \norm[]{U}{\bar{u}_h}-\langle \bar{u}_h, \kappa\Bpa \hat{w}\rangle_{\calU,\calX}$ is nonnegative.
Estimating again the terms on the right-hand side using \eqref{CAres3} and \eqref{abcd} with $\sigma=4$ and $\gamma=2$, we obtain the following a posteriori estimate.
\begin{proposition}\label{prop:est_normpenalty}
    Let $\calR_\alpha = \alpha\norm[]{U}{\cdot}$. Then the  minimizers $(\bar{u},\bar{y})$ of \eqref{eq:Tikh} and  $(\bar{u}_h,\bar{y}_h)$ of  \eqref{eq:Tikh_h} satisfy the estimate
    \begin{align}
        \norm[]{G}{C\bar{y}_h-C\bar{y}}^2
        &\leq
        4(\alpha \norm[]{U}{\bar{u}_h}
        - \langle \bar{u}_h, \Bpa \bar w_h\rangle_{\calU,\calX})
        + 4\kappa\langle \bar{u}_h, \Bpa(\bar w_h -\hat{w})\rangle_{\calU,\calX}\label{eq:est_red_Ch_1}\\
        \MoveEqLeft[-1] + 4(1-\kappa) \langle \bar{u}_h, \Bpa \bar w_h\rangle_{\calU,\calX}
        +4\norm[]{G}{CA^{-1}\res_y+(\kappa-1)(C\bar{y}_h-g^\delta)}^2,\\
        \qquad J_\alpha(\bar{u}_h,\hat{y})-J_\alpha(\bar{u},\bar{y})
        &\leq
        (\alpha \norm[]{U}{\bar{u}_h}
        - \langle \bar{u}_h, \Bpa \bar w_h\rangle_{\calU,\calX})
        + \kappa\langle \bar{u}_h, \Bpa(\bar w_h -\hat{w})\rangle_{\calU,\calX}\label{eq:est_red_Ch_1_J}\\
        \MoveEqLeft[-1] + (1-\kappa) \langle \bar{u}_h, \Bpa \bar w_h\rangle_{\calU,\calX}
        \\
        \MoveEqLeft[-1]+\norm[]{G}{CA^{-1}\res_y+(\kappa-1)(C\bar{y}_h-g^\delta)}\norm[]{G}{C\bar{y}_h-g^\delta},
    \end{align}
    with $\res_y$ as in \eqref{eq:res123} and $\kappa$ satisfying \eqref{eq:kappa}.
\end{proposition}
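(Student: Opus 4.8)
The plan is to specialize the general functional error estimate \eqref{eq:estRepin} from \cref{th:Repin} to the norm-penalty case $\calR_\alpha=\alpha\norm[]{U}{\cdot}$, whose Fenchel conjugate is $\calR_\alpha^*=\delta_{B_\alpha^\calX}$, and then substitute the specific test functions $u=\bar u_h$ and $g=\kappa C\bar y_h+(1-\kappa)g^\delta$. First I would record that with $\phi_\alpha\equiv0$, \eqref{eq:estRepin} reads $\tfrac12\norm[]{G}{Ku-K\bar u}^2\le J_\alpha(u,Ku)-J_\alpha(\bar u,K\bar u)\le \calR_\alpha(u)+\calR_\alpha^*(K^*g^*)+G(Ku)+G^*(-g^*)$; since $\calR_\alpha^*$ is an indicator function, the term $\calR_\alpha^*(K^*g^*)$ is finite (and zero) precisely when $K^*g^*\in B_\alpha^\calX$, which is exactly the admissibility condition imposed in \eqref{eq:est_red_C}. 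Expanding $G(Ku)+G^*(-g^*)$ with $g^*=g^\delta-g$ and $G(\cdot)=\tfrac12\norm[]{G}{\cdot-g^\delta}^2$ gives, after the elementary identity $\tfrac12\norm[]{G}{a-g^\delta}^2+\tfrac12\norm[]{G}{a}^2-\tfrac12\norm[]{G}{g^\delta}^2=\langle a,a-g^\delta\rangle$ type manipulation already carried out in \eqref{eq:est_red_C}, the bound $2\alpha\norm[]{U}{u}+2\langle u,K^*(g-g^\delta)\rangle_{\calU,\calX}+\norm[]{G}{Ku-g}^2$; this is just \eqref{eq:est_red_C}, which I may quote directly.

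Next I would carry out the substitution. Setting $g-g^\delta=(\kappa-1)(C\bar y_h-g^\delta)$ — equivalently $g=\kappa C\bar y_h+(1-\kappa)g^\delta$ — and using $K^*(C\bar y_h-g^\delta)=B^*(A^*)^{-1}C^*(C\bar y_h-g^\delta)=\Bpa\hat w$ from the definition of $\hat w$ in \eqref{eq:yhatwhat}, the admissibility requirement $K^*g^*\in B_\alpha^\calX$ becomes $\norm[]{X}{\kappa\Bpa\hat w}\le\alpha$, which holds by the choice \eqref{eq:kappa}. With $u=\bar u_h$ one has $K\bar u_h=CS\bar u_h=C\hat y$, so $\norm[]{G}{K\bar u_h-K\bar u}^2=\norm[]{G}{C\hat y-C\bar y}^2$; moreover $Ku-g=C\hat y-\kappa C\bar y_h-(1-\kappa)g^\delta=CA^{-1}(A\hat y-A\bar y_h)+A\bar y_h\cdot 0\ldots$ — more carefully, $C\hat y-C\bar y_h=-CA^{-1}\res_y$ by \eqref{CAres3}, hence $Ku-g=-CA^{-1}\res_y+(1-\kappa)(C\bar y_h-g^\delta)$, and the linear term $2\langle\bar u_h,K^*(g-g^\delta)\rangle=2(\kappa-1)\langle\bar u_h,\Bpa\hat w\rangle$. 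Splitting $\langle\bar u_h,\Bpa\hat w\rangle=\langle\bar u_h,\Bpa\bar w_h\rangle-\langle\bar u_h,\Bpa(\bar w_h-\hat w)\rangle$ and reorganizing gives exactly the chain displayed in \eqref{eq:est_red_Ch}, yielding $\norm[]{G}{C\hat y-C\bar y}^2\le 2(\alpha\norm[]{U}{\bar u_h}-\langle\bar u_h,\Bpa\bar w_h\rangle)+2\kappa\langle\bar u_h,\Bpa(\bar w_h-\hat w)\rangle+2(1-\kappa)\langle\bar u_h,\Bpa\bar w_h\rangle+\norm[]{G}{CA^{-1}\res_y+(\kappa-1)(C\bar y_h-g^\delta)}^2$.

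To pass from the estimate on $C\hat y-C\bar y$ to one on $C\bar y_h-C\bar y$ (needed for \eqref{eq:accuracy_residual}) and to estimate the cost functional gap, I would invoke the same device used in \cref{aposteriori_hilbert}: the triangle inequality $\norm[]{G}{C\bar y_h-C\bar y}\le\norm[]{G}{C\bar y_h-C\hat y}+\norm[]{G}{C\hat y-C\bar y}$ together with the abstract implication \eqref{abcd}–\eqref{rhosigma} (specialized to $\sigma=4$, $\gamma=2$), where the roles are $a=\norm[]{G}{C\hat y-C\bar y}^2$, $b=\norm[]{G}{C\hat y-C\bar y}$, $d=\norm[]{G}{C\bar y_h-C\hat y}=\norm[]{G}{CA^{-1}\res_y}$, and $c$ is the right-hand side of \eqref{eq:est_red_Ch} with the $\res_y$-norm term removed. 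This gives \eqref{eq:est_red_Ch_1}. For the cost gap, I would use identity \eqref{estJ}, $J_\alpha(\bar u_h,\bar y_h)-J_\alpha(\bar u_h,\hat y)=(C\bar y_h-g^\delta,C\bar y_h-C\hat y)_\calG-\tfrac12\norm[]{G}{C\bar y_h-C\hat y}^2$, add it to half of the middle bound $2(J_\alpha(\bar u_h,\hat y)-J_\alpha(\bar u,\bar y))$ from \eqref{eq:est_red_Ch}, apply Cauchy–Schwarz to the inner product term with $\norm[]{G}{C\bar y_h-C\hat y}=\norm[]{G}{CA^{-1}\res_y}$ and also bound the analogous $(\kappa-1)$-contribution; combining the $\norm[]{G}{\cdot}^2$ pieces as $\norm[]{G}{CA^{-1}\res_y+(\kappa-1)(C\bar y_h-g^\delta)}\cdot\norm[]{G}{C\bar y_h-g^\delta}$ yields \eqref{eq:est_red_Ch_1_J}. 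The only genuinely delicate point is keeping the bookkeeping of the $\kappa$-dependent cross terms consistent between the two estimates and making sure the combination of the two squared-norm residuals into a single product term in the cost estimate is done with the correct signs; everything else is a direct transcription of the arguments already established for the Hilbert-space and norm-constraint cases.
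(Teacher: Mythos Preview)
Your approach is the same as the paper's: derive \eqref{eq:est_red_Ch} from \cref{th:Repin} with the specific choice $g=\kappa C\bar y_h+(1-\kappa)g^\delta$, then pass to $C\bar y_h$ via \eqref{CAres3} and \eqref{abcd} with $\sigma=4$, $\gamma=2$. The paper's own proof is a one-line reference to exactly these ingredients.

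That said, your bookkeeping has a few slips you should fix before writing it out. First, with $g=\kappa C\bar y_h+(1-\kappa)g^\delta$ one has $g-g^\delta=\kappa(C\bar y_h-g^\delta)$, not $(\kappa-1)(C\bar y_h-g^\delta)$; this propagates into the sign and factor of the linear term. Second, in the application of \eqref{abcd} you should take $a=0$ (there is no $\phi_\alpha$ contribution here), not $a=\norm[]{G}{C\hat y-C\bar y}^2$; and the $d$ that matches the hypothesis $b^2\le c+d^2$ from \eqref{eq:est_red_Ch} is $d=\norm[]{G}{CA^{-1}\res_y+(\kappa-1)(C\bar y_h-g^\delta)}$, not $d=\norm[]{G}{CA^{-1}\res_y}$, since the former is the term actually appearing on the right-hand side. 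Third, note that the second inequality in the proposition is stated for $J_\alpha(\bar u_h,\hat y)-J_\alpha(\bar u,\bar y)$, so \eqref{estJ} is not required there; it is however the right tool if you wish to go on to $J_\alpha(\bar u_h,\bar y_h)$ as in \eqref{etaex3_J}.
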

If a duality mapping ${\calJ}^\calU(u)\in\partial\norm[]{U}{\cdot}(u)$ exists (e.g., if $\calX=\calU^*$),
we could again define a residual for the discrete version of the second relation in \eqref{eq:optsys_C} via
${\res}_u:=\alpha{\calJ}^\calU(\bar{u}_h)- \Bpa\bar{w}_h$
and proceed similarly as in \cref{sec:errest_red_Ban2_est}. Since this will not be the case in the example below, we do not do so here.

The quantity $1-\kappa$ can be estimated by
\begin{equation}\label{eq:est_kappam1_wh}
    1-\kappa \leq
    \max \left(1- \frac{\alpha}{\norm[]{X}{\Bpa \hat{w}}}, \ 0\right)
    \le
    \max \left(\frac{\norm[]{X}{\Bpa \bar w_h}-\alpha+\norm[]{X}{\Bpa (\bar w_h-\hat{w})}}{\norm[]{X}{\Bpa \bar w_h}+\norm[]{X}{\Bpa (\bar w_h-\hat{w})}}, \ 0\right).
\end{equation}
This bound can be written in terms of the residual $\res_w$ as
\begin{equation}\label{estkappa}
    1-\kappa \le
    \max \left(\frac{\norm[]{X}{\Bpa \bar w_h}-\alpha+\norm[]{X}{\Bpa(\Apa)^{-1}\res_w}}{\norm[]{X}{\Bpa \bar w_h}+\norm[]{X}{\Bpa(\Apa)^{-1}\res_w}}, \ 0\right),
\end{equation}
which implies that the quantity $1-\kappa$ is a combination of the violation of the  dual constraint $\norm[]{X}{\Bpa \bar w_h}\le \alpha$
and the residual $\res_w$. Thus we can expect $1-\kappa$ to be small for a
sufficiently fine discretization.
We refer to \cite{RoeschWachsmuth} for a  related error estimate for state-constrained optimal control problems.

\subsection{Application to inverse source problem}\label{subsec:example3}

We now apply the estimate from \cref{prop:est_normpenalty} to the model problem \eqref{eq:ex1} for the case of sparsity regularization. In this case, we have
$\calU=\mathcal{M}(\omega_c)$ and $\calX={C}_b(\omega_c)$, i.e., $\calU=\calX^*$. Due to the low regularity of the source term, we here set $\calY= W_0^{1,q'}(\Omega)$ and $\calW= W_0^{1,q}(\Omega)$, where $q'=\frac{q}{q-1}$ with $n<q\leq\frac{2n}{n-2}$ to guarantee $W_0^{1,q'}(\Omega)\subseteq L^2(\Omega)$.
The operator $B:\mathcal{M}(\omega_c) \to W^{-1,q'}(\Omega)$ is defined as
\begin{equation}
    \langle Bu, v\rangle_{W^{-1,q'},W_0^{1,q}} = \int_{\omega_c} v\,\mathrm{d} u,
\end{equation}
with $\Bpa w = w\vert_{\omega_c}$.
The Tikhonov problem is then given by
\begin{equation}\label{eq:ex1cmin}
    \left\{\begin{aligned}
            &\min_{y,u} \frac12 \norm[]{L2o}{y-g^\delta}^2 +\alpha\norm[]{Mc}{u}\\
            &\mbox{s.t. }-\Delta y = \chi_{\omega_c} u,\quad y|_{\partial\Omega}=0.
    \end{aligned}\right.
\end{equation}
From \cite{Clason:2012}, we have existence of a minimizer $\bar u\in\mathcal{M}(\omega_c)$ as well as an optimal state $\bar y\in W_0^{1,q'}(\Omega)$ and an adjoint state $\bar w\in W_0^{1,q}(\Omega)$ satisfying the optimality conditions
\begin{equation}\label{eq:optsys_C_ex1}
    \left\{\begin{aligned}
            &-\Delta\bar{w}+\chi_{\omega_o}(\bar{y}-g^\delta)=0,\quad \bar{y}|_{\partial\Omega}=0\\
            &\norm[]{Cc}{\bar{w}}\leq\alpha \ \text{ and } \
            \scalprod{McCc}{\bar{u}_h}{\tilde w- \bar{w}}\leq0
            \quad \forall \norm[]{Cc}{\tilde{w}}\leq\alpha,\\
            &-\Delta \bar{y}-\chi_{\omega_c}\bar{u}=0,\quad \bar{y}|_{\partial\Omega}=0.
    \end{aligned}\right.
\end{equation}
As $\Omega$ is convex and polyhedral, we can employ $H^2$-regularity results.
We take here as well $\calY_h\subset\calY$ and $\calW_h\subset\calW$ as piecewise linear finite elements, and thus the residuals in the first and third relation are once more given by
\begin{align}\label{eq:res123_ex1c}
    \res_w&=\chi_{\omega_o}(\bar{y}_h-g^\delta)-\Delta\bar{w}_h,\\
    \res_y&=-\Delta\bar{y}_h-\chi_{\omega_c}\bar{u}_h.
\end{align}
We again use a variational discretization $\calU_h=\calU$. It was shown in \cite{CCK12} that the corresponding semi-discretization of \eqref{eq:ex1cmin} admits a unique minimizer of the form $\bar{u}_h=\sum_{j=1}^{N_c} u_j \delta_{x_j}$, where $\delta_x$ denotes the Dirac measure concentrated on $x\in\Omega$ and $\{x_j\}_{j=1}^{N_c}$ are the interior vertices of $\calT_h$ lying in $\omega_c$. Hence, we have that
\begin{equation}
    \alpha\norm[]{Mc}{\bar{u}_h}=\scalprod{McCc}{\bar{u}_h}{\bar w_h},
\end{equation}
so that the first term on the right-hand sides of \eqref{eq:est_red_Ch_1} and \eqref{eq:est_red_Ch_1_J} vanish. Furthermore, from \cite{CCK12} we have that
\begin{equation}
    \scalprod{McCc}{\bar {u}_h}{w_h} = \sum_{j=1}^{N_c}u_j w_j,
\end{equation}
for any $w_h = \sum_{j=1}^{N_c} w_j e_j$, where $e_j$ is the piecewise linear finite element basis functions corresponding to the vertex $x_j$.

To estimate the term $A^{-1}\res_y$, we use the residual error estimator for Dirac measure data from  \cite{arayabr06} (note that here $\res_y\vert_K\notin L^2(K)$):
There exists a constant $c_2>0$ independent of $h$ such that
\begin{equation}\label{eq:etay2}
    \norm[]{L2o}{\bar{y}_h-\hat{y}}
    \leq
    c_2  \left( \sum_{K\in \calT_h} h_K^{3} \norm[]{L2pK}{\llbracket\nabla \bar{y}_h\cdot \nu\rrbracket}^2\right)^{1/2}
    =:c_2\eta_y.
\end{equation}

The term $\scalprod{McCc}{\bar{u}_h}{\bar w_h - \hat w}$ from the right-hand side of \eqref{eq:est_red_Ch_1} can be estimated as
\begin{equation}\label{eq:etaw}
    \begin{aligned}[t]
        \allowdisplaybreaks
        \scalprod{McCc}{\bar{u}_h}{\bar w_h - \hat w}
        &= \sum_{j=1}^{N_h} u_j (\bar{w}_h(x_j)-\hat{w}(x_j))
        = \sum_{j=1}^{N_h} u_j (\bar{w}_h(x_j)-\calI^\calT\hat{w}(x_j))\\
        &= \int_\Omega \nabla \bar{y}_h  \nabla(\bar{w}_h-\calI^\calT\hat{w})\,\mathrm{d}x
        = \int_\Omega \nabla \bar{y}_h  \nabla(\hat{w}-\calI^\calT\hat{w}) \,\mathrm{d}x \\
        &= \sum_{K\in \calT_h} \int_{\partial K} \nabla \bar{y}_h\cdot \nu (\hat{w}-\calI^\calT\hat{w})\,\mathrm{d}s\\
        &\leq c_2\Cint \left( \sum_{K\in \calT_h} h_K^{3} \norm[]{L2pK}{\llbracket\nabla \bar{y}_h\cdot \nu\rrbracket}^2 \right)^{1/2}
        |\hat{w}|_{H^2(\Omega)}\\
        &\leq c_2\Cint\Cstb \left( \sum_{K\in \calT_h} h_K^{3} \norm[]{L2pK}{\llbracket\nabla \bar{y}_h\cdot \nu\rrbracket}^2\right)^{1/2}
        \norm[]{L2o}{\bar{y}_h-g^\delta}\\
        &=:c_3\eta_w,
    \end{aligned}
\end{equation}
where we have used the definition of $\bar{y}_h$ in the third equality, the definition of $\bar{w}_h$ and $\hat{w}$ in the fourth equality, and elementwise integration by parts, elementwise linearity of $\bar{y}_h$ in the fifth equality, as well as \eqref{eq:estint} and \eqref{eq:eststab}.

In order to estimate $1-\kappa$, we apply the $L^\infty(\Omega)$ residual error estimator of \cite{nssv06}; see also \cite{RoeschWachsmuth},
which is valid even for nonconvex polyhedral domains.
It was proven in \cite{nssv06} that there exists a constant
$c>0$ depending on $\Omega$ and the shape regularity of the triangulation such that
\begin{equation}\label{eq:est_w_linfty}
    \begin{aligned}[t]
        \norm[]{Linftyc}{\bar w_h - \hat w}&\le c|\log h_\mathrm{min}|^2
        \max_{K\in \calT_h} \Big( h_K^2 \norm[]{LinftyK}{{-}\Delta\bar w_h+\chi_{\omega_o}(\bar y_h-g^\delta)}\\
        \MoveEqLeft[-11] + h_K \norm[]{LinftypK}{\llbracket\nabla \bar{w}_h\cdot \nu\rrbracket}\Big) =: c\, \eta_{w}^\infty,
    \end{aligned}
\end{equation}
where $h_\mathrm{min}:=\min_{K\in \calT_h}h_K$. Inserting this into \eqref{eq:est_kappam1_wh}, we obtain
\begin{equation}\label{eq:etakappa}
    1-\kappa \leq
    \max \left(\frac{\norm[]{X}{\Bpa \bar w_h}-\alpha+c\eta_w^\infty}{\norm[]{X}{\Bpa \bar w_h}+c\eta_w^\infty}, \ 0\right)=:\eta_\kappa.
\end{equation}

Collecting all the results, we obtain from \cref{prop:est_normpenalty} the a posteriori estimates
\begin{align}
    \norm[]{L2o}{\bar{y}_h-\bar{y}}^2
    &\le 4 c_3\eta_w
    +4\eta_\kappa\scalprod{McCc}{\bar{u}_h}{\bar w_h}
    \label{etaex3} \\
    \MoveEqLeft[-1]+4 \Bigl(c_2\eta_y +\eta_\kappa\norm[]{L2o}{\bar{y}_h-g^\delta}\Bigr)^2,\\
    J_\alpha(\bar{u}_h,\bar{y}_h)-J_\alpha(\bar{u},\bar{y})
    &\le c_3\eta_w
    +\eta_\kappa\scalprod{McCc}{\bar{u}_h}{\bar w_h}\label{etaex3_J}\\
    \MoveEqLeft[-1]+4 \Bigl(c_2  \eta_y +\eta_\kappa\norm[]{L2o}{\bar{y}_h-g^\delta}\Bigr)\norm[]{L2o}{\bar{y}_h-g^\delta}.
\end{align}

\begin{remark}
    A posteriori estimators for a state-constrained control problem can also be found in \cite{roeschssiebertsteinig}.
    This control problem is related to the dual problem to \eqref{eq:ex1cmin}, which takes the form of a state-constrained problem without a discrepancy term. (Conversely, the dual to the problem in \cite{roeschssiebertsteinig} involves a Huber norm in place of the measure-space norm in \eqref{eq:ex1cmin}.)
    Furthermore,  in \cite{roeschssiebertsteinig} the state constraint is penalized, which manifests in an additional $L^2$ penalty in the dual problem.
    The resulting error estimator then gives combined bounds on the   regularization and the discretization error.
\end{remark}

\section{Numerical example}\label{sec:NumTests}

We illustrate our error estimators with numerical results for the example from \cref{subsec:example3}. In order to have available an exact analytical solution, we use the example from \cite[Section 8.1]{Pieper:2013}: Setting $\Omega=\omega_c=\omega_o=B_1(0)\subseteq \R^2$, we have that $-\Delta y^\dagger = u^\dagger$ for
\begin{equation}
    y^\dagger(x)=-\frac{1}{2\pi}\ln\left(\max\left\{\rho,|x|_2\right\}\right), \qquad
    u^\dagger=-\frac{1}{2\pi\rho}\mathcal{H}^1\vert_{\partial B_\rho(0)},
\end{equation}
where $\rho\in(0,1)$ is arbitrary and $\mathcal{H}^1$ denotes the one-dimensional Hausdorff measure. Furthermore, $\bar u = u_{\alpha}^\delta = u^\dagger$ is the  minimizer of \eqref{eq:ex1cmin} for given $\alpha>0$ if the data is chosen as
\begin{equation}
    g^\delta(x)=-\frac{1}{2\pi}\ln\left(\max\left\{\rho,|x|_2\right\}\right)+\alpha\phi\left(|x|_2\right)
\end{equation}
with
\begin{equation}
    \phi(r)=\begin{cases}
        \frac{6(3r-2\rho)}{\rho^3} &\text{ for }r<\rho\\
        \frac{6(3r^2-2r\rho-2r+\rho)}{(\rho-1)^3r} &\text{ for }r\geq\rho.
    \end{cases}
\end{equation}
In the following, we set $\rho=0.5$ and $\alpha=10^{-2}$ unless specified otherwise. The corresponding discrete approximations $\bar u_h$ are computed using the approach from \cite{CCK12}.

We first illustrate \cref{prop:est_normpenalty} by comparing in \cref{fig:funcerr-pieper-vexler} the errors in residual and functional value to the terms in \eqref{etaex3} and \eqref{etaex3_J} for a sequence of adaptively refined meshes for uniform refinement (\cref{fig:funcerr-pieper-vexler:uniform}) as well as for adaptive refinement using the procedure described in \cite{RoeschWachsmuth} (\cref{fig:funcerr-pieper-vexler:adaptive}).
We also show to the rate $\mathcal{O}(h^2)$, which up to a logarithmic factor is known to hold for the residual and Tikhonov functional error; see \cite[Thm.~6.2]{Pieper:2013}. This rate also seems to be satisfied for our estimator.
\pgfplotsset{cycle list/Dark2-6}
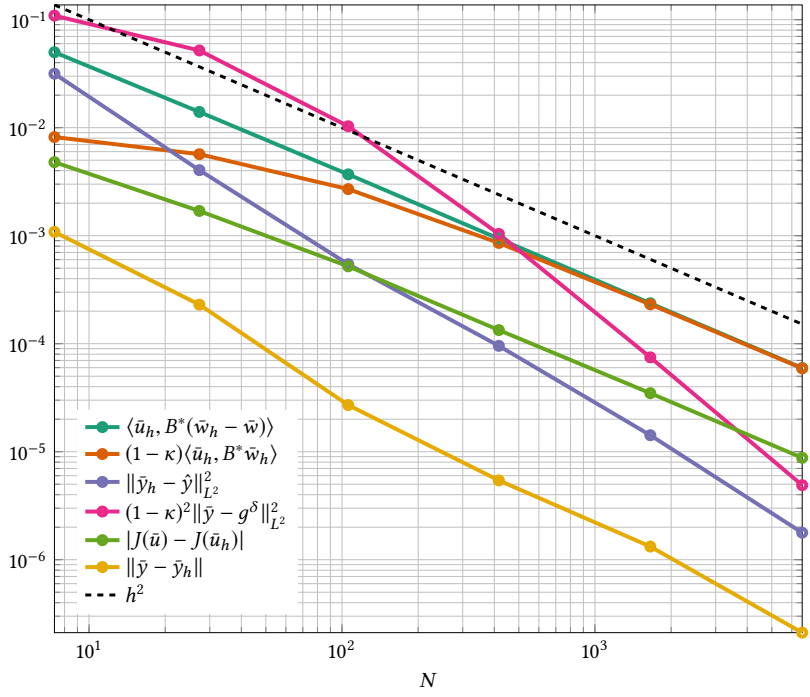
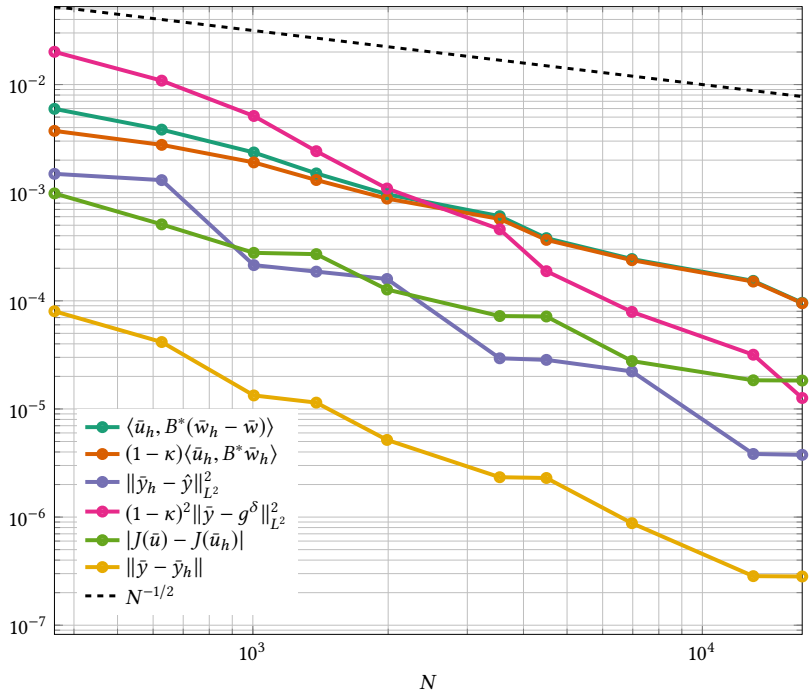
\begin{figure}
    \begin{subfigure}[t]{\textwidth}
    \centering
        \begin{tikzpicture}[scale=0.75]

\begin{axis}[%
width=\textwidth,
xmode=log,
xmin=7.30008525004821,
xmax=6595.34845809395,
xminorticks=true,
xlabel={$N$},
xminorgrids,
ymode=log,
ymin=2.11162019868825e-07,
ymax=0.136984701650354,
yminorticks=true,
yminorgrids,
axis background/.style={fill=white},
legend style={
    legend pos=south west,
    legend cell align=left,
    align=left,
    draw=none,
}
]
\addplot +[solid,mark=o,mark options={solid},line width=2pt]
  table[row sep=crcr]{%
7.30008525004821	0.0499529749284451\\
27.3015232888185	0.0139882422353981\\
105.816940999316	0.00369711987688644\\
416.860428963608	0.000937886350452271\\
1654.98105954268	0.000236726901805903\\
6595.34845809395	5.9531758774644e-05\\
};
\addlegendentry{$\langle \bar u_h,B^*(\bar w_h-\bar w)\rangle$};

\addplot +[solid,mark=o,mark options={solid},line width=2pt]
  table[row sep=crcr]{%
7.30008525004821	0.00818857382306013\\
27.3015232888185	0.00568372324552408\\
105.816940999316	0.00269948819889299\\
416.860428963608	0.000857215475515224\\
1654.98105954268	0.000231254361430355\\
6595.34845809395	5.91811995539567e-05\\
};
\addlegendentry{$(1-\kappa)\langle \bar u_h,B^*\bar w_h\rangle$};

\addplot +[solid,mark=o,mark options={solid},line width=2pt]
  table[row sep=crcr]{%
7.30008525004821	0.0316048813656292\\
27.3015232888185	0.00404455455003403\\
105.816940999316	0.000544273154162026\\
416.860428963608	9.55301826319617e-05\\
1654.98105954268	1.4206453608891e-05\\
6595.34845809395	1.77845316192209e-06\\
};
\addlegendentry{$\|\bar y_h - \hat{y}\|_{L^2}^2$};

\addplot +[solid,mark=o,mark options={solid},line width=2pt]
  table[row sep=crcr]{%
7.30008525004821	0.109080315276827\\
27.3015232888185	0.051868165600746\\
105.816940999316	0.0103087833630103\\
416.860428963608	0.00103878392622232\\
1654.98105954268	7.47717628738865e-05\\
6595.34845809395	4.89885375186705e-06\\
};
\addlegendentry{$(1-\kappa)^2 \|\bar y-g^\delta\|_{L^2}^2$};

\addplot +[solid,mark=o,mark options={solid},line width=2pt]
  table[row sep=crcr]{%
7.30008525004821	0.00479609301805109\\
27.3015232888185	0.00169466853845943\\
105.816940999316	0.000520690106164901\\
416.860428963608	0.000133729719529393\\
1654.98105954268	3.48217915142127e-05\\
6595.34845809395	8.77310180105034e-06\\
};
\addlegendentry{$|J(\bar u)-J(\bar u_h)|$};

\addplot +[solid,mark=o,mark options={solid},line width=2pt]
  table[row sep=crcr]{%
7.30008525004821	0.00108391405643421\\
27.3015232888185	0.000229964252464884\\
105.816940999316	2.70360426392873e-05\\
416.860428963608	5.4232735565149e-06\\
1654.98105954268	1.3235659832174e-06\\
6595.34845809395	2.11162019868825e-07\\
};
\addlegendentry{$\|\bar y-\bar y_h\|$};

\addplot [color=black,dashed,line width=1.5pt]
  table[row sep=crcr]{%
7.30008525004821	0.136984701650354\\
27.3015232888185	0.0366279928567047\\
105.816940999316	0.00945028263486151\\
416.860428963608	0.00239888444793425\\
1654.98105954268	0.000604236522366202\\
6595.34845809395	0.000151622011536446\\
};
\addlegendentry{$h^2$};

\end{axis}
\end{tikzpicture}%
        \caption{uniform refinement}
        \label{fig:funcerr-pieper-vexler:uniform}
    \end{subfigure}
    
    \begin{subfigure}[t]{\textwidth}
    \centering
        \begin{tikzpicture}[scale=0.75]

\begin{axis}[%
width=\textwidth,
xmode=log,
xmin=362,
xmax=16677,
xminorticks=true,
xlabel={$N$},
xminorgrids,
ymode=log,
ymin=0.82280194807475e-07,
ymax=0.0525588331227637,
yminorticks=true,
yminorgrids,
axis background/.style={fill=white},
legend style={
    legend pos=south west,
    legend cell align=left,
    align=left,
    draw=none,
}
]
\addplot +[solid,mark=o,mark options={solid},line width=2pt]
  table[row sep=crcr]{%
362	0.00596221342010821\\
627	0.00383093245762244\\
1005	0.00235510730290961\\
1384	0.00150751286298328\\
1989	0.000967225522820037\\
3542	0.000606405665000283\\
4498	0.000379968889087357\\
6971	0.000243435182003274\\
12979	0.000152938633226986\\
16677	9.59260960477648e-05\\
};
\addlegendentry{$\langle \bar u_h,B^*(\bar w_h-\bar w)\rangle$};

\addplot +[solid,mark=o,mark options={solid},line width=2pt]
  table[row sep=crcr]{%
362	0.00372984512961341\\
627	0.00276898969649714\\
1005	0.00190591210756899\\
1384	0.00130986169858669\\
1989	0.000881840914239978\\
3542	0.000571709558736518\\
4498	0.000366049628278138\\
6971	0.000237649878874842\\
12979	0.000150634901746589\\
16677	9.50146888584704e-05\\
};
\addlegendentry{$(1-\kappa)\langle \bar u_h,B^*\bar w_h\rangle$};

\addplot +[solid,mark=o,mark options={solid},line width=2pt]
  table[row sep=crcr]{%
362	0.00149355672262179\\
627	0.00130674817067999\\
1005	0.000213810439295455\\
1384	0.000186241903054152\\
1989	0.000159196748631581\\
3542	2.94055698403307e-05\\
4498	2.84345319184755e-05\\
6971	2.22439661133352e-05\\
12979	3.83395036093508e-06\\
16677	3.76436259081847e-06\\
};
\addlegendentry{$\|\bar y_h - \hat{y}\|_{L^2}^2$};

\addplot +[solid,mark=o,mark options={solid},line width=2pt]
  table[row sep=crcr]{%
362	0.0200821598276037\\
627	0.0108716220858435\\
1005	0.0051208085748293\\
1384	0.00241933919277336\\
1989	0.00109301886883349\\
3542	0.000458585772518865\\
4498	0.000187979102904968\\
6971	7.90124317684877e-05\\
12979	3.17354146117782e-05\\
16677	1.26259433009581e-05\\
};
\addlegendentry{$(1-\kappa)^2 \|\bar y-g^\delta\|_{L^2}^2$};

\addplot +[solid,mark=o,mark options={solid},line width=2pt]
  table[row sep=crcr]{%
362	0.00098463627514913\\
627	0.000509357722599607\\
1005	0.000277793680346042\\
1384	0.000270176592911467\\
1989	0.000127132544466273\\
3542	7.22948969291703e-05\\
4498	7.16272603591903e-05\\
6971	2.77298790219982e-05\\
12979	1.84112438820355e-05\\
16677	1.83132404792674e-05\\
};
\addlegendentry{$|J(\bar u)-J(\bar u_h)|$};

\addplot +[solid,mark=o,mark options={solid},line width=2pt]
  table[row sep=crcr]{%
362	8.00941254297523e-05\\
627	4.15667594411524e-05\\
1005	1.33364993335684e-05\\
1384	1.1442665890405e-05\\
1989	5.17025241723227e-06\\
3542	2.337203941507e-06\\
4498	2.29895204940308e-06\\
6971	8.78088587204e-07\\
12979	2.8433754627128e-07\\
16677	2.82280194807475e-07\\
};
\addlegendentry{$\|\bar y-\bar y_h\|$};

\addplot [color=black,dashed,line width=1.5pt]
  table[row sep=crcr]{%
362	0.0525588331227637\\
627	0.0399361531915436\\
1005	0.0315440148938256\\
1384	0.0268801665285235\\
1989	0.0224224264665438\\
3542	0.0168025703177487\\
4498	0.0149104336479388\\
6971	0.0119771215943978\\
12979	0.00877767271724973\\
16677	0.0077435665587447\\
};
\addlegendentry{${N}^{-1/2}$};

\end{axis}
\end{tikzpicture}%
        \caption{adaptive refinement}
        \label{fig:funcerr-pieper-vexler:adaptive}
    \end{subfigure}

    \caption{Comparison of true error and estimator} 
    \label{fig:funcerr-pieper-vexler}
\end{figure}

To illustrate \cref{prop:conv}, we consider $g:=y^\dagger$ as exact data, add Gaussian noise at different levels $\delta$, and adaptively compute the corresponding minimizer $u_{\alpha(\delta),h(\delta)}^\delta$. Specifically, we
start from a relatively large $\alpha_0=10^{-2}$ and coarse uniform mesh. In an outer loop, we then reduce the regularization parameter $\alpha_k=\alpha_0\theta^k$ for $\theta=0.6$  until the discrepancy principle \eqref{eq:discrprinc} with $\overline{\tau}=2$ is satisfied. In an inner loop, we adaptively refine the discretization according to the error estimator from \cref{prop:est_normpenalty} until the precision requirements \eqref{eq:accuracy_residual} and \eqref{eq:accuracy_cost} from \cref{prop:conv} are satisfied. The resulting residuals, regularization parameters and functional values for different noise levels are plotted in \cref{fig:plot_delta} and show a convergence rate of $\mathcal{O}(\delta)$.
\pgfplotsset{cycle list/Dark2-3}
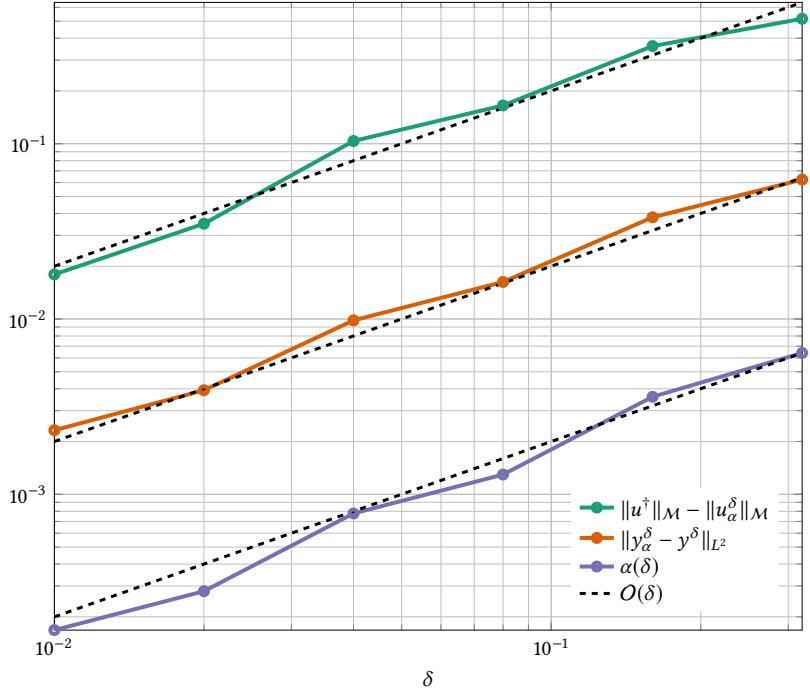
\begin{figure}
    \centering
    \begin{tikzpicture}[scale=0.75]

\begin{axis}[%
width=\textwidth,
xmode=log,
xmin=0.01,
xmax=0.32,
xminorticks=true,
xmajorgrids,
xminorgrids,
xlabel={$\delta$},
ymode=log,
ymin=0.0001679616,
ymax=0.64,
yminorticks=true,
ymajorgrids,
yminorgrids,
% axis background/.style={fill=white},
legend style={
    legend pos=south east,
    legend cell align=left,
    align=left,
    draw=none,
}
]
\addplot +[solid,mark=o,line width=2pt]
  table[row sep=crcr]{%
0.01	0.0179607886998\\
0.02	0.0349696511668\\
0.04	0.1035874464367\\
0.08	0.1653315535308\\
0.16	0.3605937925276\\
0.32	0.5160658402162\\
};
\addlegendentry{$\|u^\dag\|_{\mathcal{M}}-\|u_\alpha^\delta\|_{\mathcal{M}}$};

\addplot +[solid,mark=o,line width=2pt]
  table[row sep=crcr]{%
0.01	0.002319315331044\\
0.02	0.003923789215632\\
0.04	0.009819577089955\\
0.08	0.01626412093331\\
0.16	0.03803372385517\\
0.32	0.06242364899947\\
};
\addlegendentry{$\|y_{\alpha}^\delta-y^\delta\|_{L^2}$};

\addplot +[solid,mark=o,line width=2pt]
  table[row sep=crcr]{%
0.01	0.0001679616\\
0.02	0.000279936\\
0.04	0.0007776\\
0.08	0.001296\\
0.16	0.0036\\
0.32	0.0064\\
};
\addlegendentry{$\alpha(\delta)$};

\addplot [color=black,dashed,line width=1.5pt]
  table[row sep=crcr]{%
0.01	0.02\\
0.02	0.04\\
0.04	0.08\\
0.08	0.16\\
0.16	0.32\\
0.32	0.64\\
};
\addlegendentry{$\mathcal{O}(\delta)$};

\addplot [color=black,dashed,forget plot,line width=1.5pt]
  table[row sep=crcr]{%
0.01	0.002\\
0.02	0.004\\
0.04	0.008\\
0.08	0.016\\
0.16	0.032\\
0.32	0.064\\
};
\addplot [color=black,dashed,forget plot,line width=1.5pt]
  table[row sep=crcr]{%
0.01	0.0002\\
0.02	0.0004\\
0.04	0.0008\\
0.08	0.0016\\
0.16	0.0032\\
0.32	0.0064\\
};
\end{axis}
\end{tikzpicture}%
    \caption{Illustration of convergence rates as $\delta\to0$}
    \label{fig:plot_delta}
\end{figure}

\section{Conclusion}

Reliable estimators for the discretization error in Tikhonov regularization can be computed using the approach from \cite{Repin00}.
Combining this with a general result on convergence of discrete approximations and an appropriate adaptive mesh refinement strategy yields convergence of these approximations to a solution of the inverse problem.
The approach can in particular be applied to the Banach-space setting required for sparsity enhancement or Ivanov regularization.

These error estimators can be incorporated into a local refinement strategy for mesh adaptation. As shown in the examples, the estimators can be written in terms of sums over the element domains (or their interfaces) of a triangulation. Thus it makes sense to subdivide elements with relatively large contribution to the error estimator.
Note that using variational discretizations according to \cite{CCK12,Hinze05}, we do not refine independently for parameter, state, and adjoint, but use a common mesh for all three quantities.

Future research will be devoted to transferring this approach to nonlinear inverse problems via iterative linearization similarly to \cite{KaltenbacherKirchnerVeljovic13} as well as to all-at-once approaches based on the model-and-measurement formulation (\refeq{model}--\refeq{measurement}).

\appendix
\section*{Appendix}\label{app}

In this appendix we prove that the implication \eqref{abcd} holds for $\sigma$ and $\gamma$ chosen according to \eqref{rhosigma}.
Since $a+b^2\leq c+d^2$ is equivalent to $a+(b+d)^2\le 2bd+c+2d^2$ for $b,d\ge0$, the implication \eqref{abcd} is equivalent to
\begin{equation}\label{bcd}
    \forall c,d\geq0\ \forall 0\leq b\leq\sqrt{c+d^2}: \ 2bd\leq (\gamma-1)c+(\sigma-2)d^2,
    \tag{\textsc{a}.1}
\end{equation}
which (with $b=0$) can only be satisfied if $\gamma\geq1$ and $\sigma\geq2$.
It suffices to consider in \eqref{bcd} strictly positive $d$, so that upon division by $d^2$ and setting $x=\frac{b}{d}$ and $y=\frac{c}{d^2}$, the conclusion of \eqref{abcd} is equivalent to
\begin{equation}
    \sigma\geq2\ \wedge \ \gamma\geq1 \ \wedge \ \forall y\geq0\ \forall  x\in\left[0,\sqrt{1+y}\right]: \ 2x\leq (\gamma-1)y+\sigma-2.
\end{equation}
This obviously holds iff
\begin{equation}
    \sigma\geq2\ \wedge \ \gamma\geq1 \ \wedge \ \forall y\geq0: \ 2\sqrt{1+y}\leq (\gamma-1)y+\sigma-2.
\end{equation}
Setting $z=\sqrt{1+y}-1$, this is equivalent to
\begin{equation}
    \sigma\geq2\ \wedge \ \gamma\geq1 \ \wedge \ \forall z\geq0: \ 2z\leq(\gamma-1)((z+1)^2-1) +\sigma-4.
\end{equation}
For $z=0$, this implies $\sigma\geq4$, and hence \eqref{abcd} is equivalent to
\begin{equation}
    \sigma\geq4\ \wedge \ \gamma\geq1 \ \wedge \ \forall z\geq0: \ 0\leq(\gamma-1) z^2-2(2-\gamma)z+\sigma-4.
\end{equation}
We first consider the right-hand side as a quadratic polynomial in $z$, whose roots are given by $z_{\mp}=(\gamma-1)^{-1}(2-\gamma\mp\sqrt{D})$ for $D:=(2-\gamma)^2-(\gamma-1)(\sigma-4)=\gamma^2-\gamma\sigma+\sigma$. We thus arrive at the equivalent condition
\begin{equation}
    \sigma\geq4\ \wedge \ \gamma\geq1 \ \wedge \ \left( D<0 \ \vee \ \left[D\geq0\ \wedge 2-\gamma+\sqrt{D}\leq0\right]\right).
\end{equation}
Considering now  $D$ as a quadratic polynomial in $\gamma$ with roots
\begin{equation}
    \gamma_-=\frac{2\sigma}{\sigma+\sqrt{\sigma^2-4\sigma}},\qquad\quad
    \gamma_+=\frac{\sigma+\sqrt{\sigma^2-4\sigma}}{2},
\end{equation}
we arrive at
\begin{equation}
    \sigma\geq4\ \wedge \ \gamma\geq1 \ \wedge \
    \left( \gamma_-<\gamma<\gamma_+
        \ \vee \ \left[\left( \gamma\geq\gamma_+ \ \vee \ \gamma\leq\gamma_-\right)\ \wedge \
    \gamma\geq2 \ \wedge \ (\gamma-2)^2\geq D\right]\right).
\end{equation}
For $\sigma\geq4$ and $\gamma\geq1$, it can be easily checked that
$(\gamma-2)^2\geq D$ and $1<\gamma_-\leq2$ hold (the latter with strict inequality if $\sigma>4$). We have thus shown that \eqref{abcd} holds iff the conditions \eqref{rhosigma} are satisfied.

\section*{Acknowledgments}

The authors wish to thank the reviewers for their valuable comments.
Financial support by the German Science Foundation DFG and the Austrian Science Fund FWF under grants
Cl 487/1-1, Wa 3626/1-1, and I 2271 is gratefully acknowledged.
The second author is supported by the Karl Popper Kolleg ``Modeling --  Simulation -- Optimization'' funded by the Alpen-Adria-Universit\"at Klagenfurt and by the Carinthian Economic Promotion Fund (KWF).

\printbibliography
\end{document}